\newcommand{\tp}{\theta,\phi}
\newcommand{\R}{\mathbf R}
\newcommand{\C}{{\mathbb C}}
\newcommand{\Z}{{\mathbb Z}}
\newcommand{\F}{{\mathcal F}}
\renewcommand{\O}{{\mathcal O}}
\renewcommand{\O}{{\mathcal O}}
\renewcommand{\Im}{{\text{Im}}}
\renewcommand{\Re}{{\text{Re}}}
\newcommand{\nn}{\nonumber}
\newcommand{\Texp}{\mathbf{t}^{\mbox{{\tiny exp}}}}
\renewcommand{\t}{{\mathbf{t}}}
\newcommand{\txz}{{\mathbf{t}}(\xi,\zeta)}
\newcommand{\qexp}{q^{\mbox{{\tiny exp}}}}
\newcommand{\qhat}{\hat q}
\newcommand{\gammaexp}{\mathbf{\gamma}^{\mbox{{\tiny exp}}}}
\newcommand{\uexp}{{u}^{\mbox{{\tiny exp}}}}
\newcommand{\du}{\delta u}
\newcommand{\gammaapp}{\mathbf{\gamma}^{\mbox{{\tiny app}}}}
\newcommand{\qp}{{q}^{\mbox{{\tiny p}}}}
\newcommand{\mup}{{\mu}^{\mbox{{\tiny p}}}}
\newcommand{\gp}{{g}^{\mbox{{\tiny p}}}}
\newcommand\eqref[1]{(\ref{#1})} 
\newcommand{\text}{\textnormal }
\newcommand{\Om}{\Omega}
\newcommand{\DOm}{\partial\Omega}
\newcommand{\ol}{\overline}
\newcommand{\V}{{\mathcal{V}}} 
\newcommand{\RRR}{\mathbf{R}}
\newcommand{\supp}{\text{supp}}
\newtheorem{thm}{Theorem}[section]
\newtheorem{cor}[thm]{Corollary}
\newtheorem{prop}[thm]{Proposition}
\begin{document}

\title[Direct numerical reconstruction of conductivities in 3-D]{Direct numerical reconstruction of
  conductivities in three dimensions}

\author{Jutta Bikowski$^1$, Kim Knudsen$^2$, Jennifer Mueller$^3$}

\address{1\ Agrosphere, ICG-4, Forschungszentrum J\"ulich, J\"ulich, Germany }

\address{2\ Department of Mathematics, Technical University
  of Denmark, Kgs.\ Lyngby, Denmark}

\address{3\ 
 Department of Mathematics, Colorado State University, Fort Collins} 
\ead{jubi305@gmail.com, k.knudsen@mat.dtu.dk, mueller@math.colostate.edu}

\ams{35R30, 65N21}

\begin{abstract}
  A direct three dimensional EIT reconstruction algorithm based on complex geometrical
  optics solutions and a nonlinear scattering transform is presented
  and implemented for spherically symmetric conductivity
  distributions.  The scattering transform is computed both with a
  Born approximation and from the forward problem for purposes of
  comparison.  Reconstructions are computed for several test problems.
  A connection to Calder\'on's linear reconstruction algorithm is established, and
  reconstructions using both methods are compared.
\end{abstract}

\maketitle


\section{Introduction}

The reconstruction of conductivity distributions in two or three
dimensions from measurements of the current density-to-voltage map is known as
electrical impedance tomography, or EIT, and has applications in
medical imaging, nondestructive testing, and geophysics. For the 3-D
bounded domain considered here, medical applications include head
imaging and the detection of breast tumors.  See, for example,
\cite{holder_book} for a survey of clinical applications of EIT. In
this work, we consider a bounded domain in $\R^3$ and present a direct reconstruction algorithm and its
numerical implementation on the unit sphere. The theoretical foundation of the method dates back
more than 20 years to a series of papers by
Sylvester-Uhlmann \cite{sylvesterUhlmann87}, Novikov \cite{novikov},
Nachman-Sylvester-Uhlmann \cite{NSU} and Nachman \cite{nachman88}.
The algorithm makes use of complex geometrical optics (CGO) solutions
to the Schr\"odinger equation and uses the inverse scattering method.  This is described in detail
in section 2 of this paper.

The inverse conductivity problem was first formulated mathematically
by Calder\'on \cite{Calderon} as follows. Let $\Omega \subset \R^n, n
\geq 3$ be a simply connected, bounded domain with smooth boundary
$\partial \Om,$ and let $\gamma \in L^\infty(\Omega)$ denote the
conductivity distribution. Assume there exists $C>0$ such that for $x\in\Omega$, 
  $C^{-1} \leq \gamma(x) \leq C$.  
The electric potential $u$ arising from the application of a known voltage to the boundary of $\Omega$ is  
modeled by the generalized Laplace equation with Dirichlet boundary condition
\begin{eqnarray}  \label{eq:cond}
  \nabla \cdot \gamma \nabla u = 0 \mbox{ in } \Omega,\qquad u = f  \mbox{ on } \partial \Omega.
\end{eqnarray}
The Dirichlet-to-Neumann map $\Lambda_\gamma$
is defined by
\begin{eqnarray}  \label{DNmap}
  \Lambda_\gamma f = \gamma \frac{\partial u}{\partial \nu}\vert_{\partial \Omega}.
\end{eqnarray}
Thus $\Lambda_\gamma$ represents static electrical boundary
measurements: it maps an applied voltage distribution on the boundary
to the resulting current flux through the boundary. Calder\'on
\cite{Calderon} posed the question of whether the conductivity
$\gamma$ is uniquely determined by the Dirichlet-to-Neumann map, and
if so, how to reconstruct the conductivity. He gave an affirmative
answer to the uniqueness question for the linearized problem and gave
a reconstruction algorithm for that case.  His algorithm is described
in section~\ref{sec:calderon} of this paper.

The uniqueness question for $\gamma\in L^{\infty}(\Om)$ is still open
in $\R^3$, but has been solved recently by Astala and P\"aiv\"arinta
\cite{astalaPaivarinta} for a bounded domain in $\R^2$, sharpening the previous results due to Nachman 
\cite{nachman96} in which $\gamma\in W^{2,p}(\Om)$, $p>1,$ and Brown
and Uhlmann \cite{brownUhlmann} in which $\gamma\in W^{1,p}(\Om)$, $p>2$. 
In three dimensions the uniqueness problem was solved for smooth conductivities
in \cite{sylvesterUhlmann87}. At the time of
this publication, in $\R^3$ the uniqueness results with lowest
regularity are \cite{brownTorres} with $\gamma\in W^{3/2,p}(\Om)$, $p>2n$ 
and \cite{PPU} with $\gamma\in W^{3/2,\infty}(\Om)$.

Most existing 3-D EIT reconstruction algorithms are linear or
iterative, minimizing a functional that describes the
nearness of the predicted voltages to the measured data in a given
norm with one or more regularization terms.  In contrast, the algorithm presented here is direct and fully nonlinear.
It is similar to the 2-D D-bar algorthims based on the works \cite{nachman96} and \cite{brownUhlmann}, which were first
implemented in \cite{SMI00, ams, knudsen03,MS03}.  In these initial works the Born approximation to the CGO solutions is used in the 
computation of the scattering transform.  It was used successfully on experimental tank data in, for
example, \cite{isaacson_reconstruct_chest_2004,murphy_2009} and human chest data in \cite{isaacson_imaging_cardiac_2006,DeAngeloMueller}.  This
inspired the approach in section \ref{sec:texp} of this paper in which the Born approximation is used in the 3-D direct algorithm.  For further
reading on 2-D D-bar algorithms, the reader is referred to 
\cite{knudsenLassasMuellerSiltanen07} in which the application to discontinuous conductivity distributions is specifically addressed, 
and \cite{knudsenLassasMuellerSiltanen09} in which a rigorous regularization
framework is established using the full scattering transform.
Calder\'on's method has
also recently been used for the reconstruction from experimental data
in both 2-D \cite{bikowskiMueller} and 3-D  \cite{Greg_Dave}.


In this work, we assume the conductivity $\gamma\in C^2(\ol \Omega),$ we
take $\Omega$ to be the unit sphere in $\R^3,$ and assume $\gamma = 1$
near $\partial\Omega.$ The smoothness assumption on $\gamma$ is necessary, but
the other assumptions are made mainly for simplicity in the numerical
computations. We stress in particular that the theory is valid in more
complex geometries.  The study of the effects of noise in the data is not in the scope of this
paper, but rather is left for future work.

The outline of the paper is as follows. In section 2.2 we describe a
direct reconstruction algorithm with a linearizing assumption
tantamount to a Born approximation. That approach is referred to as
the $\Texp$ approach, consistent with the notation used in the 2-D
D-bar algorithms.  An explicit connection to the linearized method of
Calder\'on is established in section \ref{sec:calderon}.  The
reconstruction of the conductivity in the 2-D D-bar method described
in the works above is achieved by taking a small frequency limit in a
D-bar equation for the CGO solutions to directly obtain $\gamma(x)$.
In contrast, here we have to take a high complex frequency limit.  A
D-bar equation for the 3-D problem is utilized in
\cite{corneanKnudsenSiltanen}, resulting in a promising, but more
complicated approach than the one studied here.  The numerical
implementation of that approach is left for future work.  In section
\ref{sec:spherical} we consider the case of spherically symmetric
conductivities and show symmetry properties in the scattering
transform. We also show how the Dirichlet-to-Neumann map can be
represented and approximated in that case. Details on the numerical
implementation are found in section \ref{sec:numerics}.  Numerical
examples are found in section \ref{sec:examples}.

\section{The reconstruction methods} \label{sec:recon} In this section
we describe the theoretical reconstruction method, the $\Texp$ approach, and
the relationship to Calder\'on's linearized method.

\subsection{The nonlinear reconstruction method}\label{sec:nonlinear}
The method was developed in
\cite{sylvesterUhlmann87,novikov,NSU,nachman88}; here we provide a
brief outline.  The reader is referred to \cite{nachman88} for
rigorous proofs.  The equations closely parallel those of the 2-D
problem (note that \cite{nachman88} precedes that work), and so
readers familiar with that case will recognize the notation and
functions involved.

The initial step is to transform the conductivity equation into a
Schr\"odinger equation. Indeed, if $u$ satisfies \eqref{eq:cond} then
$v= \gamma^{1/2}u$ satisfies
\begin{eqnarray}\label{eq:schr}
  (-\Delta + q) v = 0 \text{ in } \Omega ~~~ \mbox{with} ~~~  q = \frac {\Delta \gamma^{1/2}}{\gamma^{1/2}}.
\end{eqnarray}
Note that $q = 0$ near $\partial\Omega.$ The Dirichlet-to-Neumann map
for equation \eqref{eq:schr} is defined by
 $$ \Lambda_q f =  \frac{\partial v}{\partial \nu}|_{\partial \Omega},$$
where now $v$ satisfies \eqref{eq:schr} with $v|_{\partial\Omega} = f.$
In general the maps $\Lambda_\gamma$ and $\Lambda_q$ are  related by
\begin{eqnarray} \label{DtNmaps}
\Lambda_q = \gamma^{-1/2}\left(\Lambda_{\gamma}+\frac{1}{2}\frac{\partial\gamma}{\partial\nu}\right)\gamma^{-1/2}.
\end{eqnarray}
The assumption that $\gamma = 1$ in a neighborhood of $\partial \Omega$ simplifies \eqref{DtNmaps} 
to $\Lambda_q = \Lambda_\gamma.$

To define the CGO solutions, introduce a complex frequency parameter
$\zeta \in \C^3$ and define the set
\begin{eqnarray} \label{vdef}
\V = \{\C^3\setminus\{0\} \colon \zeta\cdot\zeta =0\}.
\end{eqnarray}
Then $e^{ix\cdot\zeta}$ is
 harmonic in $\R^3$ if and only if
$\zeta \in \V.$ For $\xi\in\R^3$, introduce the subset of $\V$ given by
\begin{eqnarray}
  \label{eq:Vxi}
   \V_\xi = \{ \zeta \in \V \colon (\xi + \zeta)^2 = 0\}.
\end{eqnarray}
Note that $\zeta\cdot \zeta = (\xi + \zeta)^2 = 0$ gives an explicit
characterization of $\V_\xi$ in terms of an auxiliary vector
$\xi^\perp \in \R^3$ with $ \xi^\perp\cdot \xi = 0.$ Indeed suppose
$\zeta_R, \zeta_I \in \R^3.$ Then $\zeta = \zeta_R + i
\zeta_I\in\V_\xi$ if and only if
\begin{eqnarray}\label{zetaform}
     \zeta_R &= -\xi/2 +
    \xi^\perp,\\
    \zeta_I \cdot \xi = \zeta_I \cdot \xi^\perp &= 0 ,\quad |\zeta_I| = |\zeta_R|.\nn
\end{eqnarray}

Since $q=0$ in a neighborhood of $\DOm$, one can extend $q=0$ into $\R^3\setminus \ol\Omega.$ The CGO solutions
$\psi(x,\zeta)$ to the Schr\"odinger equation solve
\begin{eqnarray}
  \label{schrodinger}
  (-\Delta + q(x))\psi(x,\zeta) = 0, \quad x\in \R^3, \quad \zeta \in \V,
\end{eqnarray}
and behave like $e^{ix\cdot\zeta}$ for $|\zeta|$ large. More precisely,
define 
$$\mu(x,\zeta) = \psi(x,\zeta) e^{-ix\cdot\zeta}.$$
 Then $\mu-1$ approaches
zero in  a certain sense as either $|x|$ or $|\zeta|$ tends to infinity, see \cite{sylvesterUhlmann87,nachman88}.  Note that
$\psi(x,\zeta)$ grows exponentially for $x\cdot\Im \zeta <0$. The function $\mu$ satisfies
\begin{eqnarray}\label{mueq}
  (-\Delta -2i\zeta\cdot\nabla +  q) \mu(x,\zeta) = 0 \text{ in } \R^3.
\end{eqnarray}

Denote by $G_\zeta$ the Faddeev Green's function defined by
\begin{eqnarray}\label{Gzeta}
  G_\zeta (x) = e^{ix\cdot \zeta}g_\zeta(x), ~~~ \mbox{where} ~~~ g_\zeta(x) = \frac{1}{(2\pi)^3}\int_{\R^3} \frac{e^{ix\cdot
  \xi}}{|\xi|^2 + 2\xi\cdot \zeta}d\xi,
\end{eqnarray}
 where the integral is understood in the sense of the Fourier transform defined on the space of tempered
distributions. The functions $G_\zeta,g_\zeta$ are fundamental
solutions of the Laplace equation
and conjugate Laplace equation respectively, i.e.\
\begin{eqnarray}
  \Delta_\zeta g_\zeta = -\delta_0 \quad \mbox{and} \quad  \Delta G_\zeta = -\delta_0.
\end{eqnarray}
Then \eqref{mueq} is equivalent to the Faddeev-Lippmann-Schwinger equation
\begin{eqnarray}\label{muLS}
  (I + g_\zeta \ast (q \;\cdot\;))\mu = 1 \text{ in } \R^3.
\end{eqnarray}
Estimates for the operator $g_\zeta \ast$ for large $\zeta$
(\cite{sylvesterUhlmann87}) and small $\zeta$
(\cite{corneanKnudsenSiltanen}) give the existence and uniqueness of
$\mu$ (and therefore $\psi$) for any sufficiently large or small $\zeta \in
\V.$

The key intermediate object in the reconstruction method is the
so-called non-physical scattering transform of the potential $q$
defined for $\xi\in\R^3$ and sufficiently large or small $\zeta \in
\V$ by
\begin{eqnarray}
  \t(\xi,\zeta) = \int_{\Omega}
  e^{-ix\cdot(\xi+\zeta)}\psi(x,\zeta)q(x)dx. \label{t}
\end{eqnarray}
Integrating by parts and assuming that $\zeta \in \V_ \xi$
we find that
\begin{eqnarray}\label{tboundary}
  \t(\xi,\zeta) =
  \int_{\partial\Omega}e^{-ix\cdot(\xi+\zeta)}(\Lambda_q-\Lambda_0)\psi(x,\zeta)d\sigma(x).
\end{eqnarray}
Thus we require $\psi|_{\partial\Omega}$ in order to compute the
scattering transform from the Dirichlet-to-Neumann map. It turns out
that 
$\psi|_{\partial\Omega}$ satisfies a uniquely solvable Fredholm
integral equation of the second kind  on $\partial\Omega$ \cite{novikov,nachman88},  namely, 
\begin{eqnarray}
  \label{eq:bie}
  \psi(x,\zeta)+ \int_{\DOm} G_{\zeta}(x-\tilde{x})
  (\Lambda_q-\Lambda_0)\psi(\tilde{x},\zeta) d\sigma(\tilde{x})= e^{ix\cdot
    \zeta}, \quad x\in\DOm.
\end{eqnarray}

Note from \eqref{t} and the fact that $\psi\sim e^{i\zeta\cdot x}$ that from the scattering transform one can compute the Fourier transform $\hat q$ of
the potential by taking the large frequency limit
\begin{eqnarray}
  \label{eq:zetalim}
  \lim_{|\zeta| \rightarrow \infty}\t(\xi,\zeta) = \hat q (\xi).
\end{eqnarray}

Summary of the reconstruction method:
\begin{enumerate}
\item\label{nonlinearStep1} Solve the boundary integral equation  \eqref{eq:bie} for
  $\psi|_{\partial\Om}$. 
  \item  Compute $\t(\xi,\zeta)$ for $\xi \in \R^3,  \; \zeta \in \V_\xi$ by \eqref{tboundary}.
\item Compute $\hat q (\xi)$ from \eqref{eq:zetalim} .
\item Compute $q$ by inverting the Fourier transform.
\item Compute $\gamma$ by solving $-\Delta\sqrt{\gamma}+q\sqrt{\gamma}=0$ in $\Om,$ $\sqrt{\gamma}|_{\partial\Omega}=1.$ 
\end{enumerate}

We stress that the ill-posedness of the inverse problem is in this
algorithm isolated in the first step.

%

\subsection{The reconstruction method using $\Texp$}\label{sec:texp}

Inspired by the $\Texp$ approximation in the 2-D D-bar method, an analogous approach can be taken in 3-D.
Approximating $\psi(x,\zeta)$ on the boundary by its asymptotic
behavior  $e^{ix\cdot \zeta}$ eliminates the need for the ill-posed
first step. We define for $\xi\in\R^3, \zeta\in \V_\xi$ 
\begin{eqnarray}\label{Texp}
\Texp(\xi,\zeta)  =  \int_{\partial\Omega}e^{-ix\cdot(\xi+\zeta)}(\Lambda_q-\Lambda_0) e^{ix\cdot \zeta}d\sigma(x).
\end{eqnarray}
This approximation is tantamount to a linearization of the first step in the reconstruction
algorithm above around $\gamma=1$.  Using $\Texp$ for $\t$ in \eqref{eq:zetalim} gives the following simple reconstruction
algorithm:
\begin{enumerate}
\item Compute $\Texp(\xi,\zeta)$ for $\xi \in \R^3, \; \zeta \in
  \V_\xi$ by \eqref{Texp}.
\item Compute  
  \begin{eqnarray}\label{texpzetalim}
    \widehat \qexp (\xi) = \lim_{|\zeta|\rightarrow
      \infty}\Texp(\xi,\zeta)
  \end{eqnarray}
  and then $\qexp$ by inverting the Fourier transform.
\item Compute $\gammaexp$ by solving $-\Delta\sqrt{\gammaexp} +\qexp
  \sqrt{\gammaexp}=0$ in $\Om$  $\sqrt{\gammaexp}|_{\partial\Omega}=1.$
\end{enumerate}
It is not guarenteed from the theory that the limit in \eqref{texpzetalim} is
well-defined. In our numerical simulations we will compute
$\Texp(\xi,\zeta)$ for a fixed but large value of $\zeta.$ This will
numerically define $\widehat \qexp (\xi).$ 


\subsection{Calderon's linearized reconstruction method} \label{sec:calderon}

Several properties of $\Texp$ can be
established from an analysis comparing this approach to that of
Calder\'on.  In \cite{knudsenLassasMuellerSiltanen07} a connection was established between the 2-D
D-bar method based on the global uniqueness proof by Nachman \cite{nachman96}
and Calder\'on's linearized reconstruction method. 

Define a function $\uexp(x,\zeta)$ as the unique solution to the boundary value problem
\begin{eqnarray*}
\nabla\cdot\gamma\nabla\uexp(x,\zeta) &= 0, \quad x\in \Omega, \quad \zeta\in \C^3 \\
\uexp\vert_{\DOm} &= e^{ix\cdot\zeta}.
\end{eqnarray*}
Integration by parts in equation \eqref{Texp} results in a formula for $\Texp$
defined in terms of $\gamma$ in the interior
\begin{eqnarray} \label{texp_int}
\Texp(\xi,\zeta) &=& \int_{\Omega}(\gamma - 1)\nabla\uexp(x,\zeta)\cdot\nabla e^{-ix\cdot(\xi+\zeta)}dx. 
\end{eqnarray}
Write $\uexp = e^{ix\cdot\zeta} + \du$ for $\du\in H^1_0(\Om)$.  Then $\du$ satisfies
\begin{eqnarray}
\nabla\cdot(\gamma\nabla\du) &=& -\nabla\cdot((\gamma-1)\nabla e^{ix\cdot\zeta}) \label{du_pde},
\end{eqnarray}
and one can estimate
\begin{eqnarray}\label{du_estimate}
\|\du\|_{H^1(\Om)}\leq C\|(\gamma-1)\nabla
e^{ix\cdot\zeta}\|_{L^2(\Omega)} \leq |\zeta|\|\gamma - 1\|_{L^{\infty}(\Om)}e^{|\zeta|R},
\end{eqnarray}
where $R$ is the radius of the smallest ball containing $\Om$. From
\eqref{texp_int} we then get
\begin{eqnarray*}
  \Texp(\xi,\zeta) &= \int_{\Omega}(\gamma - 1)\nabla(e^{ix\cdot\zeta}
  + \delta u)\cdot\nabla e^{-ix\cdot(\xi+\zeta)}dx\\
  &= \int_{\Omega}(\gamma - 1)\nabla e^{ix\cdot\zeta}
  \cdot\nabla e^{-ix\cdot(\xi+\zeta)}dx + R(\xi,\zeta)\\
  &= (\xi\cdot \zeta) \int_{\Omega}(\gamma - 1)e^{-ix\cdot\xi}dx +
  R(\xi,\zeta),
\end{eqnarray*}
where the remainder term
\begin{eqnarray*}
   R(\xi,\zeta) = \int_{\Omega}(\gamma - 1)\nabla \delta u \cdot\nabla e^{-ix\cdot(\xi+\zeta)}dx.
\end{eqnarray*}
Since $(\xi + \zeta)^2 = \zeta^2 = 0$ we have $-\xi^2 = 2\xi \cdot
\zeta$ and hence
\begin{eqnarray}\label{FTgamma}
   \Texp(\xi,\zeta) =  -\frac{|\xi|^2}2 \widehat{(\gamma - 1)} (\xi) + R(\xi,\zeta).
\end{eqnarray}
The remainder is $\O (|\zeta|)$ for $\zeta$ small,  which can be seen from
\eqref{du_estimate}. This fact suggests that we use the minimal $\zeta
\in \V_\xi,$ that is
\begin{eqnarray*}
  \zeta_\xi = -\frac \xi 2 + i\zeta_I, \qquad \text{with }  \zeta_I
  \cdot \xi = 0,\; |\zeta_I| = \frac{|\xi|}2.
\end{eqnarray*}
Moreover, with this particular choice we can divide in \eqref{FTgamma}
by $|\xi|^2$  as the following proposition shows.
\begin{prop}
  Suppose $\gamma \in L^\infty(\Omega).$ Then
  $$|\Texp(\xi,\zeta_\xi)| = \O(|\xi|^2)$$ for small $|\xi|.$  
\end{prop}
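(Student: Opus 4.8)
The plan is to bound the two terms on the right-hand side of \eqref{FTgamma}, evaluated at $\zeta=\zeta_\xi$, separately. The first term is immediate: since $\Om$ is bounded and $\gamma\in L^\infty(\Om)$ we have $\gamma-1\in L^1(\Om)$, so $\bigl|\widehat{(\gamma-1)}(\xi)\bigr|\le\|\gamma-1\|_{L^1(\Om)}$ for all $\xi$, and therefore $\bigl|\frac{|\xi|^2}{2}\widehat{(\gamma-1)}(\xi)\bigr|\le\frac{|\xi|^2}{2}\|\gamma-1\|_{L^1(\Om)}=\O(|\xi|^2)$. Note that only the $L^\infty$ bound is used here; no decay of $\widehat{(\gamma-1)}$ is needed, which is why the hypothesis $\gamma\in L^\infty(\Om)$ suffices.

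For the remainder $R(\xi,\zeta_\xi)$ the point is that the minimal choice $\zeta_\xi=-\xi/2+i\zeta_I$, with $\zeta_I\cdot\xi=0$ and $|\zeta_I|=|\xi|/2$, forces both $|\zeta_\xi|$ and $|\xi+\zeta_\xi|$ to be comparable to $|\xi|$: a direct computation gives $|\zeta_\xi|^2=|\xi|^2/4+|\zeta_I|^2=|\xi|^2/2$, and since $\xi+\zeta_\xi=\xi/2+i\zeta_I$ one also gets $|\xi+\zeta_\xi|=|\xi|/\sqrt2$. In particular $|\zeta_I|\to0$ as $|\xi|\to0$, so the exponential factors $|e^{\pm ix\cdot\eta}|\le e^{R|\xi|/2}$ (with $\eta=\zeta_\xi$ or $\eta=\xi+\zeta_\xi$) are bounded — indeed tend to $1$ — for small $|\xi|$. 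I would then estimate, using Cauchy--Schwarz in the formula for $R$,
\begin{eqnarray*}
|R(\xi,\zeta_\xi)|\le\|\gamma-1\|_{L^\infty(\Om)}\,\|\nabla\du\|_{L^2(\Om)}\,\bigl\|\nabla e^{-ix\cdot(\xi+\zeta_\xi)}\bigr\|_{L^2(\Om)}.
\end{eqnarray*}
The last factor equals $|\xi+\zeta_\xi|\,\bigl\|e^{-ix\cdot(\xi+\zeta_\xi)}\bigr\|_{L^2(\Om)}\le|\xi+\zeta_\xi|\,|\Om|^{1/2}e^{R|\xi|/2}=\O(|\xi|)$, while the estimate \eqref{du_estimate} applied at $\zeta=\zeta_\xi$ has right-hand side $\O(|\zeta_\xi|)=\O(|\xi|)$ (the exponential $e^{|\zeta_\xi|R}$ staying bounded for small $|\xi|$), so $\|\du\|_{H^1(\Om)}=\O(|\xi|)$. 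Multiplying the two $\O(|\xi|)$ bounds gives $|R(\xi,\zeta_\xi)|=\O(|\xi|^2)$, and combining with the bound on the first term finishes the argument.

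No step here is genuinely hard; the only thing requiring attention is verifying that the minimal choice $\zeta_\xi$ makes $|\zeta_\xi|$ and $|\xi+\zeta_\xi|$ vanish linearly in $|\xi|$, so that the gradient prefactors each contribute one power of $|\xi|$ and, crucially, the exponential growth $e^{|\zeta|R}$ in \eqref{du_estimate} does not spoil the limit $|\xi|\to0$. For a generic $\zeta\in\V_\xi$ of fixed size that exponential would blow up and the argument would fail, which is precisely why the minimal $\zeta$ is used.
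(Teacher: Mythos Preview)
Your argument is correct, but it is not the route the paper takes. The paper's proof goes back to the boundary formula \eqref{Texp} rather than the interior decomposition \eqref{FTgamma}: since $\Lambda_\gamma-\Lambda_1$ annihilates constants and has mean-free range, one may replace $e^{ix\cdot\zeta_\xi}$ by $e^{ix\cdot\zeta_\xi}-1$ and $e^{-ix\cdot(\xi+\zeta_\xi)}$ by $e^{-ix\cdot(\xi+\zeta_\xi)}-1$ without changing the integral, and then the bound follows from $|e^{iz}-1|\le C|z|$ on the bounded surface $\partial\Omega$, together with the $H^{1/2}\to H^{-1/2}$ boundedness of $\Lambda_\gamma-\Lambda_1$. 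That proof is shorter and stays entirely at the level of boundary data. Your approach, by contrast, exploits the decomposition and the energy estimate \eqref{du_estimate} that were set up just before the proposition, which is a very natural continuation of the surrounding text; it also makes transparent that the remainder $R(\xi,\zeta_\xi)$ is itself $\O(|\xi|^2)$, a slightly sharper statement than the $\O(|\zeta|)$ remark preceding the proposition. Either way, the essential observation you correctly isolate --- that the minimal $\zeta_\xi$ makes both $|\zeta_\xi|$ and $|\xi+\zeta_\xi|$ of order $|\xi|$ while keeping the exponentials bounded --- is what drives the result.
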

\begin{proof}
  Note that $|\zeta_\xi|^2 =|\xi|^2/2.$ Since $\Lambda_{\gamma}$ maps constant functions to zero and has its range inside
  the space of mean free functions in $H^{-1/2}(\partial\Omega)$, we have that
  for small $|\xi|$
  \begin{eqnarray*}
    |\Texp(\xi,\zeta_\xi)| &=
    \left|\int_{\partial\Omega}(e^{-ix\cdot (\xi +
      \zeta_\xi)}-1)(\Lambda_\gamma-\Lambda_1)(e^{ix\cdot
      \zeta_\xi}-1)d\sigma(x)\right|\\
& \leq C |\xi + \zeta_\xi| |\zeta_\xi|,
  \end{eqnarray*}
and hence the particular form of $\zeta_\xi$ gives the conclusion.
\end{proof}
With the particular choice $\zeta =\zeta_\xi$ in \eqref{FTgamma} we
now neglect the term $R(\xi,\zeta_\xi)$ and divide by $-|\xi|^2,$ which
gives
\begin{eqnarray*}
   -2\frac{\Texp(\xi,\zeta_\xi)}{|\xi|^2} \approx  \widehat{(\gamma - 1)}(\xi).
\end{eqnarray*}
Introduce $\chi_B(\xi),$ the characteristic function on the ball
$|\xi|<B.$ With this function we remove high frequencies  and invert the Fourier transform.
This results in a linear reconstruction algorithm
\begin{eqnarray}\label{gammaapp}
  \gammaapp( x) = 1 -\frac {2}{(2\pi)^3} \int_{\R^3} \frac{\Texp(\xi,\zeta_\xi)}{|\xi|^2}
   e^{ix\cdot \xi} \chi_B(\xi)d\xi.
\end{eqnarray}
This formula is equivalent to the second inversion formula obtained by Calder\'on \cite[p.\ 72]{Calderon}. 

In summary the linear reconstruction algorithm consists of two steps:
\begin{enumerate}
\item Compute $\Texp(\xi,\zeta_\xi)$ by \eqref{Texp}.
\item Compute the reconstruction by \eqref{gammaapp}.
\end{enumerate}

This method is truly a linearization of the nonlinear reconstruction method
outlined in section \ref{sec:nonlinear}. As explained above $\Texp$
is a linearization of the first step on page
\pageref{nonlinearStep1}. Moreover, the computation of the quantity
\begin{eqnarray*}
  1 - \frac {1}{(2\pi)^3} \int_{\R^3}
  \frac{\Texp(\xi,\zeta_\xi)}{|\xi|^2} e^{ix\cdot \xi} \chi_B(\xi)d\xi
\end{eqnarray*}
linearizes the step $\hat q \mapsto \sqrt\gamma.$ Finally,
linearizing the square function gives \eqref{gammaapp}.   
\section{The case of a spherically symmetric conductivity} \label{sec:spherical}

As a test problem it is of special interest to consider spherically
symmetric conductivities in the unit sphere.  In this case  the
scattering transform has certain symmetry properties described below. Moreover, the 
Dirichlet-to-Neumann map is described explicitly  in terms of
eigenvalues and eigenfunctions, which in this case are
the spherical harmonics. These properties will be
derived in this section.

\subsection{Symmetry in the scattering transform} \label{sec:symmetries}
The Fourier transform of a spherically symmetric function is
spherically symmetric itself. For the scattering transforms $\t$ and
$\Texp$ we have similar prperties. In the following we will tacitly
assume that $\zeta$ is either small or large such that $\txz$ is well-defined.
\begin{prop}
  Let $R\in SO(2)$ be arbitrary, and suppose $q(x) = q(Rx)$ for $x \in \Omega.$ Then
  \begin{eqnarray}\label{Tsymm1}
    \t(\xi,\zeta) = \t(R\xi,R\zeta), \quad \Texp(\xi,\zeta) = \Texp (R\xi,R\zeta)
  \end{eqnarray}
  In particular,
\begin{eqnarray}\label{Tsymm2}
  \t(\xi,\zeta_1) = \t(\xi,\zeta_2), \quad \Texp(\xi,\zeta_1) = \Texp(\xi,\zeta_2) 
\end{eqnarray}
for all $\zeta_1,\zeta_2 \in \V_\xi.$
\end{prop}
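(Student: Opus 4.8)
The plan is to reduce both identities in \eqref{Tsymm1} to a change of variables in the defining integrals, the only genuine input being the rotation covariance of the functions $\psi$ and $\uexp$. Let $R\in O(3)$; since $q$ is spherically symmetric it satisfies $q(Rx)=q(x)$ for every $R\in O(3)$ (in particular for the rotations in the statement), and likewise $\gamma(Rx)=\gamma(x)$, so via \eqref{eq:schr} the potential $q=\Delta\sqrt\gamma/\sqrt\gamma$ inherits this as well. First I would record the auxiliary identity $\psi(R^{-1}x,\zeta)=\psi(x,R\zeta)$: setting $\psi^R(x):=\psi(R^{-1}x,\zeta)$, the Laplacian commutes with orthogonal substitutions and $q(R^{-1}x)=q(x)$, so $(-\Delta+q)\psi^R=0$ in $\R^3$; moreover $R\zeta\in\V$ because $(R\zeta)\cdot(R\zeta)=\zeta\cdot\zeta=0$, and $|R\zeta|=|\zeta|$, so $R\zeta$ lies in the same large (resp.\ small) regime while $\psi^R(x)e^{-ix\cdot R\zeta}=\mu(R^{-1}x,\zeta)\to1$ in the required sense; the uniqueness of CGO solutions recorded after \eqref{muLS} then forces $\psi^R=\psi(\cdot,R\zeta)$. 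The same argument, with $-\Delta$ replaced by the operator $\nabla\cdot\gamma\nabla$ (which also commutes with orthogonal substitutions because $\gamma$ is radial) and the asymptotic normalization replaced by the Dirichlet datum $e^{ix\cdot\zeta}$, yields $\uexp(R^{-1}x,\zeta)=\uexp(x,R\zeta)$, equivalently the equivariance $\Lambda_q(h\circ R^{-1})=(\Lambda_q h)\circ R^{-1}$.

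Granting this, \eqref{Tsymm1} is a direct substitution $x=R^{-1}y$. In \eqref{t} the unit ball and $dy$ are invariant, $q(R^{-1}y)=q(y)$, $(R^{-1}y)\cdot(\xi+\zeta)=y\cdot(R\xi+R\zeta)$, and $\psi(R^{-1}y,\zeta)=\psi(y,R\zeta)$, so the integral is exactly $\t(R\xi,R\zeta)$. For $\Texp$ I would work from the interior representation \eqref{texp_int}, so that the covariance of $\uexp$ is directly available (alternatively one changes variables in \eqref{Texp} and invokes equivariance of $\Lambda_q$): after $x=R^{-1}y$ one uses $\gamma(R^{-1}y)=\gamma(y)$, the chain rule $\nabla_y[\phi(R^{-1}y)]=R\,(\nabla\phi)(R^{-1}y)$ (valid since $R^{-T}=R$), and the invariance of the Euclidean inner product under $R$, and the integrand collapses to $(\gamma(y)-1)\nabla\uexp(y,R\zeta)\cdot\nabla e^{-iy\cdot(R\xi+R\zeta)}$, which is $\Texp(R\xi,R\zeta)$.

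For \eqref{Tsymm2} the point is that spherical symmetry makes \eqref{Tsymm1} available for \emph{every} $R\in O(3)$, so it suffices to produce, for prescribed $\zeta_1,\zeta_2\in\V_\xi$, an $R\in O(3)$ with $R\xi=\xi$ and $R\zeta_1=\zeta_2$, and then to quote \eqref{Tsymm1}. To construct $R$ I would use the description of $\V_\xi$ from \eqref{zetaform}: writing $\zeta_j=-\xi/2+\xi^\perp_j+i\zeta_{I,j}$ with $\xi^\perp_j\perp\xi$ and $\zeta_{I,j}$ orthogonal to both $\xi$ and $\xi^\perp_j$, one needs an orthogonal map that fixes $\xi$ — hence acts on the 2-plane $\xi^\perp$ — and carries the orthogonal pair $(\xi^\perp_1,\zeta_{I,1})$ to $(\xi^\perp_2,\zeta_{I,2})$. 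This geometric reduction, rather than the change of variables, is where I expect the work to concentrate: after disposing of the degenerate cases ($\xi=0$, or some $\xi^\perp_j=0$), one builds $R$ in the generic case from the planar rotation taking $\xi^\perp_1$ to $\xi^\perp_2$, composed when needed with the reflection of $\xi^\perp$ that fixes $\xi^\perp_2$, and the nontrivial point is to check, from \eqref{zetaform}, that the parametrizing data of $\zeta_1$ and $\zeta_2$ are compatible under such a map so that $R$ is genuinely orthogonal. The $\Texp$ half of \eqref{Tsymm2} then follows verbatim with $\t$ replaced by $\Texp$.
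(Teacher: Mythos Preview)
Your approach coincides with the paper's: both obtain the covariance $\psi(R^{-1}x,\zeta)=\psi(x,R\zeta)$ from the rotational invariance of $-\Delta$, the symmetry of $q$, and uniqueness of the CGO solutions, then derive \eqref{Tsymm1} by the substitution $x=R^{-1}y$; and both deduce \eqref{Tsymm2} by manufacturing a rotation that fixes $\xi$ and carries $\zeta_1$ to $\zeta_2$. The paper's construction is marginally cleaner---it sets $R\xi=\xi$, $R\xi_1^\perp=\xi_2^\perp$, $R(\xi\times\xi_1^\perp)=\xi\times\xi_2^\perp$, which lands directly in $SO(3)$ without a separate reflection step---but the idea is identical, and your treatment of the $\Texp$ case via \eqref{texp_int} is if anything more explicit than the paper's ``the reasoning is similar.''

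One point you should state rather than leave implicit: an orthogonal $R$ with $R\zeta_1=\zeta_2$ can exist only when $|\zeta_1|=|\zeta_2|$, and your planar rotation taking $\xi_1^\perp$ to $\xi_2^\perp$ silently presupposes $|\xi_1^\perp|=|\xi_2^\perp|$, which is equivalent. The paper's proof opens precisely by adding the hypothesis $|\zeta_1|=|\zeta_2|$, so both arguments in fact establish \eqref{Tsymm2} only under that extra constraint. This is all that is needed downstream (Corollary~\ref{Treal} uses $\zeta_2=\ol\zeta$, which has the same norm), and indeed the unconstrained version for $\t$ cannot hold in general, since $\t(\xi,\zeta)\to\hat q(\xi)$ as $|\zeta|\to\infty$ while $\t(\xi,\zeta)\neq\hat q(\xi)$ for generic finite $\zeta$.
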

\begin{proof}
  We will prove the result for $\t$ only; for $\Texp$ the reasoning
  is similar. Let $\R \in SO(2).$ By the uniqueness of the CGO
  solutions, the rotational invariance of the Laplace operator, and the
  symmetry in $q$ we
  have $\psi(x,\zeta) = \psi (Rx, R\zeta).$ Consider the integral
  \eqref{t} and make the change of variables $R^Ty = x:$
  \begin{eqnarray*}
    \t(\xi,\zeta) &=   \int_{\Omega}
  e^{-ix\cdot(\xi+\zeta)}\psi(x,\zeta)q(x)dx \\ 
  &= \int_{R\Omega}
  e^{-i R^Ty \cdot (\xi+\zeta)}\psi(R^Ty,\zeta)q(R^Ty)d(R^Ty)\\
  &= \int_{\Omega}
  e^{-i y \cdot R(\xi+\zeta)}\psi(y,R\zeta)q(y)dy\\
  &= \t(R\xi,R\zeta).
  \end{eqnarray*}

To prove \eqref{Tsymm2} fix $\xi \in \R^3$ and take $\zeta_1,\zeta_2 \in \V_\xi$ with
$|\zeta_1| = |\zeta_2|.$ Then
\begin{eqnarray*}
  \zeta_j = \Re(\zeta_j) + i \Im(\zeta_j),\quad \Re(\zeta_j) & =-\frac
  \xi 2 + \xi_j^\perp \; \xi_j^\perp \cdot \xi = 0,\\
 \Im(\zeta_j) \cdot \xi &= \Im(\zeta_j) \cdot \xi^\perp = 0.
\end{eqnarray*}
Define a linear transformation $R$ by $R\xi = \xi,\; R \xi_1^\perp =
\xi_2^\perp,\; R (\xi \times \xi_1^\perp) = \xi \times \xi_2^\perp.$ As
a consequence $R \in SO(2)$ and \eqref{Tsymm2} follows from
\eqref{Tsymm1}.
\end{proof}

The Fourier transform of a real and even function is real itself. For
the scattering transform we have the following equivalent property:

\begin{prop}
  Suppose $q(x)$ is real and even. Then for $ \xi \in \R^3, \; \zeta \in \V_\xi$
  \begin{eqnarray}
    \ol{\t(\xi,\zeta)} = \t(\xi,\ol\zeta), \label{Tsymm3}\quad  \ol{\Texp(\xi,\zeta)} = \Texp(\xi,\ol\zeta).
  \end{eqnarray}
\end{prop}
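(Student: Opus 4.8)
The plan is to prove the identity for $\t$ first, and then indicate the (entirely parallel) argument for $\Texp$. Starting from the definition \eqref{t} and conjugating, and using that $\xi\in\R^3$ and $q$ is real, one gets $\ol{\t(\xi,\zeta)} = \int_{\Omega} e^{ix\cdot(\xi+\ol\zeta)}\,\ol{\psi(x,\zeta)}\,q(x)\,dx$. The goal is therefore to recognize $\ol{\psi(\cdot,\zeta)}$ as a CGO solution associated with $\ol\zeta$, up to the reflection $x\mapsto -x$, and then to remove that reflection by a change of variables in the integral, using that $\Omega$ is the unit ball — hence invariant under $x\mapsto -x$ — and that $q$ is even. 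Note first that $\ol\zeta\in\V_\xi$ whenever $\zeta\in\V_\xi$ (conjugate $\zeta\cdot\zeta=0$ and $(\xi+\zeta)^2=0$, using that $\xi$ is real) and that $|\ol\zeta|=|\zeta|$, so $\psi(\cdot,\ol\zeta)$ is defined exactly when $\psi(\cdot,\zeta)$ is.

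The key observation is that $\ol{\psi(x,\zeta)} = \psi(-x,\ol\zeta)$, which follows from two applications of the uniqueness of the CGO solutions. First, conjugating \eqref{schrodinger} and using that $q$ is real shows that $\ol{\psi(x,\zeta)}$ solves $(-\Delta+q)w=0$ in $\R^3$; moreover $\ol{\psi(x,\zeta)}e^{ix\cdot\ol\zeta}=\ol{\mu(x,\zeta)}\to 1$, and since $-\ol\zeta\in\V$ with $|-\ol\zeta|=|\zeta|$, uniqueness gives $\ol{\psi(x,\zeta)}=\psi(x,-\ol\zeta)$. Second, since $q$ is even, $x\mapsto\psi(-x,\eta)$ again solves the Schr\"odinger equation for any admissible $\eta$, and $\psi(-x,\eta)e^{-ix\cdot(-\eta)}=\mu(-x,\eta)\to 1$, so $\psi(-x,\eta)=\psi(x,-\eta)$; taking $\eta=\ol\zeta$ and combining the two identities yields $\ol{\psi(x,\zeta)}=\psi(x,-\ol\zeta)=\psi(-x,\ol\zeta)$.

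Substituting this into the conjugated integral and changing variables $y=-x$ (which maps $\Omega$ onto itself with unit Jacobian) gives $\ol{\t(\xi,\zeta)} = \int_{\Omega} e^{-iy\cdot(\xi+\ol\zeta)}\psi(y,\ol\zeta)q(-y)\,dy = \int_{\Omega} e^{-iy\cdot(\xi+\ol\zeta)}\psi(y,\ol\zeta)q(y)\,dy = \t(\xi,\ol\zeta)$, the middle equality using that $q$ is even. For $\Texp$ one argues the same way: either directly from \eqref{Texp}, noting that $\Lambda_q$ (and $\Lambda_0$) is a real operator and commutes with the reflection $f(x)\mapsto f(-x)$ on $\DOm$ — here one uses that $q$ is even together with $\nu(-x)=-\nu(x)$ on the unit sphere — so that $\ol{(\Lambda_q-\Lambda_0)e^{ix\cdot\zeta}}$ evaluated at $-x$ equals $(\Lambda_q-\Lambda_0)e^{ix\cdot\ol\zeta}$, and then $y=-x$; or, equivalently, from the interior formula \eqref{texp_int}, using the analogous identity $\ol{\uexp(x,\zeta)}=\uexp(-x,\ol\zeta)$ (valid since $\gamma$ is real and even) and the same substitution.

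The computations are routine; the only thing to watch is the bookkeeping of signs. Conjugation of a CGO solution naturally produces the parameter $-\ol\zeta$ rather than $\ol\zeta$, and it is precisely the evenness of $q$ (equivalently, the reflection symmetry of the problem) that lets one trade that unwanted sign for a harmless change of variables $x\mapsto -x$ on the symmetric domain $\Omega$. One should also confirm at the outset, as noted above, that $\ol\zeta$ and $-\ol\zeta$ remain in $\V_\xi$ and in the regime — small or large $|\zeta|$ — in which existence and uniqueness of $\mu$ and $\psi$ hold; this is immediate from $|\ol\zeta|=|-\ol\zeta|=|\zeta|$.
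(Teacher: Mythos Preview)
Your proof is correct and follows essentially the same route as the paper's: the paper records the two CGO symmetries in terms of $\mu$ (namely $\mu(-x,\zeta)=\mu(x,-\zeta)$ from evenness and $\ol{\mu(x,\zeta)}=\mu(x,-\ol\zeta)$ from realness), while you state the equivalent identities for $\psi$, and in both cases the proof finishes with the change of variables $y=-x$ using that $q$ is even and $\Omega$ symmetric. Your discussion of $\Texp$ is a bit more explicit than the paper's, which simply says the reasoning is similar.
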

\begin{proof}
We will again only show the properties for $\t.$  From the uniqueness
of the CGO solutions it follows that if $q$ is even then
$\mu(-x,\zeta) = \mu(x,-\zeta).$ Moreover, if $q$ is real, then
$\ol{\mu(x,\zeta)} = \mu(x,-\ol\zeta).$ Hence, if $q$ is both even and
real then
\begin{eqnarray*}
  \ol{\t(\xi,\zeta)} &= \int_\Omega e^{ix\cdot\xi} \ol {q(x)}
  \ol{\mu(x,\zeta)}dx  = \int_\Omega e^{-i(-x)\cdot\xi}  {q(x)}{\mu(x,-\ol\zeta)}dx\\
  & = \int_\Omega e^{-iy\cdot\xi}  {q(y)}
  \ol{\mu(y,\ol\zeta)}dy\\
  &=\t(\xi,\ol\zeta).
\end{eqnarray*}
\end{proof}
We now have a corollary for spherically symmetric
potentials:
\begin{cor}\label{Treal}
  Suppose $q$ is spherically symmetric. Then 
  \begin{eqnarray*}
    \ol{\t(\xi,\zeta)} =\t(\xi,\zeta),\quad \xi \in \R^3, \; \zeta \in \V_\xi.
  \end{eqnarray*}
\end{cor}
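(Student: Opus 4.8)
The plan is to combine the two symmetry results already established for spherically symmetric potentials. A spherically symmetric $q$ is in particular real, and since reflection through the origin is a rotation composed with $-I$, or more directly since any spherically symmetric function is even, $q$ is also even. Thus both Proposition~\ref{Tsymm3} applies, giving $\ol{\t(\xi,\zeta)} = \t(\xi,\ol\zeta)$, and Proposition~\ref{Tsymm1}--\ref{Tsymm2} applies, giving $\t(\xi,\zeta_1) = \t(\xi,\zeta_2)$ for \emph{all} $\zeta_1,\zeta_2 \in \V_\xi$.

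The key observation is that $\ol\zeta \in \V_\xi$ whenever $\zeta \in \V_\xi$. Indeed, writing $\zeta = \zeta_R + i\zeta_I$ in the form \eqref{zetaform}, we see $\ol\zeta = \zeta_R - i\zeta_I$ has the same real part $-\xi/2 + \xi^\perp$ and an imaginary part $-\zeta_I$ that still satisfies $(-\zeta_I)\cdot\xi = (-\zeta_I)\cdot\xi^\perp = 0$ and $|-\zeta_I| = |\zeta_R|$; hence $\ol\zeta \in \V_\xi$. (Equivalently, $\ol\zeta\cdot\ol\zeta = \ol{\zeta\cdot\zeta} = 0$ and $(\xi+\ol\zeta)^2 = \ol{(\xi+\zeta)^2} = 0$ since $\xi$ is real.)

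Putting these together: for $\xi \in \R^3$ and $\zeta \in \V_\xi$, apply \eqref{Tsymm3} to get $\ol{\t(\xi,\zeta)} = \t(\xi,\ol\zeta)$; then apply \eqref{Tsymm2} with $\zeta_1 = \ol\zeta$ and $\zeta_2 = \zeta$ (both in $\V_\xi$) to conclude $\t(\xi,\ol\zeta) = \t(\xi,\zeta)$. Chaining these equalities yields $\ol{\t(\xi,\zeta)} = \t(\xi,\zeta)$, i.e.\ $\t(\xi,\zeta)$ is real. The same argument with $\Texp$ in place of $\t$ gives the corresponding statement for $\Texp$.

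There is really no serious obstacle here — the corollary is a bookkeeping consequence of the two preceding propositions. The only point requiring a moment's care is the verification that $\ol\zeta \in \V_\xi$, which is what licenses the use of \eqref{Tsymm2}; this is immediate from the characterization \eqref{zetaform} or from the defining equations of $\V_\xi$ together with the reality of $\xi$. One should also note explicitly that spherical symmetry implies both evenness (needed for the $\mu(-x,\zeta)=\mu(x,-\zeta)$ identity underlying \eqref{Tsymm3}) and reality of $q$, so that Proposition~\ref{Tsymm3} is indeed applicable.
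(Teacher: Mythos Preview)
Your proof is correct and follows essentially the same approach as the paper's: both combine the conjugation identity \eqref{Tsymm3} (using that a spherically symmetric $q$ is real and even) with the rotational symmetry to identify $\t(\xi,\ol\zeta)$ with $\t(\xi,\zeta)$. The only cosmetic difference is that the paper invokes \eqref{Tsymm1} by explicitly naming a rotation $R$ fixing $\xi$ and sending $\zeta$ to $\ol\zeta$, whereas you invoke the consequence \eqref{Tsymm2} after verifying $\ol\zeta\in\V_\xi$; since $|\ol\zeta|=|\zeta|$, this is exactly the situation handled in the proof of \eqref{Tsymm2}, so the two arguments are the same in substance.
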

\begin{proof}
  There exists $R \in SO(2)$ such that 
  \begin{eqnarray*}
    R(\xi) =\xi,\quad R(\zeta) = \ol \zeta,
  \end{eqnarray*}
  and hence from \eqref{Tsymm1} we have $\t(\xi,\zeta)
  = \t(\xi,\ol\zeta).$ Equation \eqref{Tsymm3} now implies the result
  for $\t.$ For $\Texp$ the result follows similarly.
\end{proof}

\subsection{Eigenfunctions and eigenvalues for the Dirichlet-to-Neumann map}
 
We use the same
ideas for the computation of eigenvalues for the 3-D problem that were used for the 2-D problem in  \cite{SMI00}. 

\begin{prop}
Let $D$ be the unit disk and suppose $\gamma(x)$ is  spherically symmetric. Then the eigenfunctions of $\Lambda_\gamma$
are the spherical harmonics $Y_l^m.$ 
\end{prop}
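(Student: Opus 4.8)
The plan is to exploit the rotational symmetry of the conductivity together with the standard separation of variables for the Laplace-type operator on the disk (or ball). Although the proposition as stated refers to ``the unit disk $D$,'' the relevant object is the spherically symmetric $\gamma$ and the spherical harmonics $Y_l^m$, so I read this as the statement on the unit ball in $\R^3$. First I would write the conductivity equation $\nabla\cdot\gamma\nabla u = 0$ in spherical coordinates $(r,\theta,\phi)$, using that $\gamma=\gamma(r)$ depends on $r$ alone. Expanding the boundary data $f$ on $\partial\Omega=S^2$ in the basis $\{Y_l^m\}$ and seeking $u$ in the separated form $u(r,\theta,\phi)=R_{l}(r)Y_l^m(\theta,\phi)$, the angular part decouples because $-\Delta_{S^2}Y_l^m = l(l+1)Y_l^m$, reducing the PDE to a single ODE for $R_l(r)$:
\begin{eqnarray*}
  \frac{1}{r^2}\bigl(r^2\gamma(r)R_l'(r)\bigr)' - \gamma(r)\frac{l(l+1)}{r^2}R_l(r) = 0,
\end{eqnarray*}
with $R_l$ regular at $r=0$ and $R_l(1)=1$.

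Next I would observe that this ODE, together with the regularity condition at the origin and the normalization at $r=1$, determines $R_l$ uniquely up to the obvious scaling, so that the solution of the full boundary value problem with data $Y_l^m$ is $u = R_l(r)Y_l^m(\theta,\phi)$. Applying the Dirichlet-to-Neumann map then gives
\begin{eqnarray*}
  \Lambda_\gamma Y_l^m = \gamma(1)\,\frac{\partial u}{\partial r}\Big|_{r=1}
  = \gamma(1)R_l'(1)\,Y_l^m,
\end{eqnarray*}
which exhibits $Y_l^m$ as an eigenfunction of $\Lambda_\gamma$ with eigenvalue $\lambda_l := \gamma(1)R_l'(1)$ depending only on $l$ (in our normalization $\gamma\equiv1$ near $\partial\Omega$, so $\gamma(1)=1$). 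Since the $Y_l^m$ form a complete orthonormal system in $L^2(S^2)$ and $\Lambda_\gamma$ is a bounded (pseudodifferential) operator, this accounts for the entire spectrum and confirms that the eigenfunctions are exactly the spherical harmonics. I would also note that the eigenvalue $\lambda_l$ is independent of $m$, reflecting the $SO(3)$-invariance of the problem and matching the $(2l+1)$-fold degeneracy.

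The one genuinely delicate point is the well-posedness and regularity of the radial ODE at $r=0$: the equation has a singular point there, and one must argue that requiring $R_l$ to extend to a smooth (or merely bounded) solution across the origin picks out a one-dimensional solution space, so that the boundary condition $R_l(1)=1$ then fixes $R_l$ uniquely. This follows from Frobenius-type analysis of the indicial equation at $r=0$ (the regular solution behaves like $r^l$, the singular one like $r^{-l-1}$), using the ellipticity bound $C^{-1}\le\gamma\le C$ to control the lower-order coefficient; for $\gamma\in C^2(\overline\Omega)$ this is standard. I would state this as the key lemma and give only a brief indication of its proof, since the rest of the argument is routine separation of variables. An alternative, slicker route that sidesteps the ODE analysis is to argue abstractly: $\Lambda_\gamma$ commutes with the natural $SO(3)$-action on functions on $S^2$ (because $\gamma$ is rotation-invariant and $\Delta$ is rotation-invariant, so rotating the data rotates the solution), hence $\Lambda_\gamma$ preserves each isotypic component, i.e.\ each space $\mathrm{span}\{Y_l^m : -l\le m\le l\}$ of fixed degree $l$; since these spaces are the irreducible $SO(3)$-subrepresentations and each occurs with multiplicity one in $L^2(S^2)$, Schur's lemma forces $\Lambda_\gamma$ to act as a scalar on each, so every $Y_l^m$ is an eigenfunction. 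I would likely present the concrete ODE computation as the main proof (since the paper needs the explicit eigenvalues for its numerics) and mention the representation-theoretic argument as a remark.
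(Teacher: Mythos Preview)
Your main argument is essentially the paper's own proof: separation of variables gives $u_{lm}=R_l(r)Y_l^m(\theta,\phi)$ with $R_l$ satisfying a radial (Euler-type) ODE, and then $\Lambda_\gamma Y_l^m=\gamma\,\partial_r R_l|_{r=1}\,Y_l^m$. You are considerably more careful than the paper---which omits the explicit ODE, the regularity discussion at $r=0$, and the Schur's-lemma alternative---but the route is the same.
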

\begin{proof} 
  When $\gamma$ is spherically symmetric it follows from separation of
  variables that the solution to $\nabla \cdot
  \gamma \nabla u_{lm} = 0$ with $u_{lm}|_{\partial D} = Y_l^m$ is
  \begin{eqnarray}\label{eq:sphericalsol}
    u_{lm} = R_{l}(r)Y_l^m(\theta, \phi),
  \end{eqnarray}
  where $R_l(r)$ solves an Euler type equation.  Thus
\begin{eqnarray}
\Lambda_\gamma Y_l^m(\tp)= \Lambda_\gamma u|_{r = 1} =  \left .\gamma \frac{\partial R_l }{\partial r}\right|_{r = 1} Y_l^m(\tp) =  \lambda_l Y_l^m(\tp). \label{eval1}
\end{eqnarray}
\end{proof}
Note that $\lambda$ is independent of $m$ since $R_l$ is independent
of $m$.

\subsection{Approximation of Eigenvalues and Eigenfunctions of the Dirichlet-to-Neumann Map}
\label{sec:eigvals}
Next we will consider how to approximate the eigenvalues
for the special case of a constant conductivity $\gamma=1$. The particular form of $R_l$ gives
the following result.
\begin{prop} The eigenvalues of $\Lambda_1$ are given by $\lambda_l = l$.
\end{prop}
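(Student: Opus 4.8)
The plan is to specialize the separation-of-variables argument from the preceding proposition to $\gamma\equiv 1$ and then simply read off the radial factor. Since $\gamma=1$, equation \eqref{eq:cond} is Laplace's equation $\Delta u=0$, and by the previous proposition the solution with boundary data $Y_l^m$ has the product form $u_{lm}=R_l(r)Y_l^m(\tp)$ of \eqref{eq:sphericalsol}. Expressing the Laplacian in spherical coordinates and using that $Y_l^m$ is an eigenfunction of the Laplace--Beltrami operator on the sphere with eigenvalue $-l(l+1)$, the condition $\Delta u_{lm}=0$ reduces to the Euler-type equation
\[
r^2 R_l'' + 2r R_l' - l(l+1) R_l = 0,\qquad 0<r<1.
\]

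Next I would solve this ODE. Its indicial polynomial is $s(s-1)+2s-l(l+1)=s^2+s-l(l+1)=(s-l)(s+l+1)$, so the two independent solutions are $r^l$ and $r^{-(l+1)}$. The second is singular at the origin and hence incompatible with $u_{lm}\in H^1(\Omega)$, i.e.\ with $u_{lm}$ being the genuine solution of the boundary value problem; discarding it and imposing the normalization $R_l(1)=1$ forces $R_l(r)=r^l$.

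Finally, since the outward unit normal on $\DOm$ is $\partial_r$ and $\gamma=1$, formula \eqref{eval1} gives
\[
\Lambda_1 Y_l^m(\tp) = \left.\frac{\partial R_l}{\partial r}\right|_{r=1} Y_l^m(\tp) = l\,r^{l-1}\big|_{r=1}\, Y_l^m(\tp) = l\, Y_l^m(\tp),
\]
so $\lambda_l=l$, as claimed.

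There is no genuine obstacle here; the computation is routine once the product structure of the preceding proposition is invoked. The only points worth a word of care are: (i) the reduction to solutions of product form, which is already justified by that proposition together with completeness of $\{Y_l^m\}$ in $L^2$ of the sphere; and (ii) the case $l=0$, where $R_0\equiv 1$, $u$ is constant, and the formula correctly returns $\lambda_0=0$, consistent with $\Lambda_1$ annihilating constants.
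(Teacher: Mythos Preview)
Your proof is correct and is exactly the argument the paper has in mind: the paper merely says ``The particular form of $R_l$ gives the following result'' without spelling out that $R_l(r)=r^l$, and you have supplied precisely that missing computation via the Euler equation and the regularity requirement at the origin.
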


In the case of a piecewise constant radially symmetric
conductivity the eigenvalues can be computed recursively. Suppose  $0 = r_0 <r_1 <
r_2<\ldots r_{N-1} < r_N = 1$ and  for $j = 1,2,\ldots,N$
\begin{eqnarray}\label{gammapw}
  \gamma (x) = \gamma_j >0,\qquad |x| \in [r_{j-1},r_j].
\end{eqnarray}

\begin{prop}
  Suppose $\gamma$ is given by \eqref{gammapw}. Then the eigenvalues of $\Lambda_\gamma$ are given by
  \begin{eqnarray*}
    \lambda_0 = 0,\quad \lambda_l = l - \frac{2l+1}{1+ C_{N-1}}, \quad l >0
  \end{eqnarray*}
where $C_j =  w_j\frac{\beta_l\gamma_{j+1} \rho_j + \gamma_j}{\gamma_{j+1}\rho_j - \gamma_j}$ with $\rho_1 = 1$, $\rho_j = \frac{C_{j-1}+ w_j}{C_{j-1} - \beta_l w_j}$, $\beta_l = \frac{l+1}{l}$ and $ w_j = r_j^{-(2l+1)}$. 
\end{prop}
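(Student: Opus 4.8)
The plan is to solve the conductivity equation layer by layer using the explicit form of the radial part $R_l$ of the solution \eqref{eq:sphericalsol}, and to track how the ``Neumann-to-Dirichlet ratio'' $\gamma R_l'/R_l$ transforms across each interface $r=r_j$. First I would recall that in a region where $\gamma$ is the constant $\gamma_j$, the equation $\nabla\cdot\gamma\nabla u_{lm}=0$ reduces to the Euler equation $r^2R_l'' + 2rR_l' - l(l+1)R_l=0$, whose general solution is $R_l(r) = A_j r^l + B_j r^{-(l+1)}$. In the innermost ball $[r_0,r_1]=[0,r_1]$ regularity at the origin forces $B_1=0$, so $R_l(r)=r^l$ there up to normalization; this is the source of the initial condition $\rho_1=1$. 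In each subsequent annulus there are two free constants, determined by the two transmission conditions at $r_{j-1}$: continuity of $R_l$ and continuity of $\gamma R_l'$ (the normal component of the flux).

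Next I would introduce, for each $j$, a quantity measuring the logarithmic-type data carried up to the top of layer $j$ — concretely the ratio $A_j/B_j$ (suitably rescaled by the powers $r_j^{\pm l}$, which is where $w_j = r_j^{-(2l+1)}$ enters). Imposing the two matching conditions at $r=r_j$ gives a Möbius (fractional-linear) recursion expressing the ratio in layer $j+1$ in terms of the ratio in layer $j$; the coefficients of this Möbius map involve $\gamma_j$, $\gamma_{j+1}$, the powers $w_j$, and the constant $\beta_l=(l+1)/l$ coming from differentiating $r^l$ versus $r^{-(l+1)}$. Carrying this through carefully should reproduce exactly the stated recursion $\rho_j = (C_{j-1}+w_j)/(C_{j-1}-\beta_l w_j)$ together with the definition $C_j = w_j\,(\beta_l\gamma_{j+1}\rho_j + \gamma_j)/(\gamma_{j+1}\rho_j-\gamma_j)$; here $C_j$ packages the ratio in a form adapted to reading off the Dirichlet-to-Neumann data. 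Finally, at $r_N=1$ one evaluates $\lambda_l = \gamma\,R_l'(1)/R_l(1)$ (with $\gamma=\gamma_N=1$ near the boundary, consistent with the standing assumption that $\gamma=1$ near $\partial\Omega$), and a short computation with $R_l(r)=A_N r^l + B_N r^{-(l+1)}$ gives $\lambda_l = l - (2l+1)/(1+C_{N-1})$. The case $l=0$ is degenerate because $\beta_0$ is undefined and $R_0$ is constant, giving $\lambda_0=0$ directly (constants are in the kernel of $\Lambda_\gamma$).

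The main obstacle is purely bookkeeping: one must be scrupulous about which normalization of $R_l$ is used in each layer, and about whether $C_j$, $\rho_j$ encode $A_j/B_j$ or its reciprocal (or a $w_j$-twisted version), since a single misplaced factor of $r_j^{2l+1}$ or an inverted Möbius map will yield a formula that looks similar but is wrong. I would therefore organize the proof around a single clearly-stated invariant — say, ``in layer $j$ the solution is proportional to $r^l + \rho_j\, r_{j-1}^{2l+1}\, r^{-(l+1)}$'' (or whatever normalization makes $\rho_1=1$ automatic) — verify the transmission conditions at $r_j$ produce the claimed recursion, and only at the very end translate to $\lambda_l$. No genuinely hard analysis is involved; existence and uniqueness of $u_{lm}$, and hence well-posedness of the recursion (non-vanishing denominators for $\gamma_j>0$), follow from the general theory already invoked for \eqref{eq:cond} and from the preceding proposition identifying the spherical harmonics as eigenfunctions.
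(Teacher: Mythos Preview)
Your proposal is correct and follows essentially the same route as the paper: write $R_l(r)=A_j r^l+B_j r^{-(l+1)}$ on each shell, impose continuity of $R_l$ and of $\gamma R_l'$ at each $r_j$, and read off $\lambda_l$ at $r=1$ using $A_N+B_N=1$. The paper's own proof is in fact terser than yours---it simply asserts the inductive relation $A_j=B_jC_{j-1}$ for $j\ge 2$ and then computes $B_N=(1+C_{N-1})^{-1}$, leaving the verification of the recursion for $C_j,\rho_j$ to the reader---so your emphasis on carefully tracking which M\"obius-normalized ratio is being carried is exactly the bookkeeping the paper suppresses.
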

\begin{proof}  Since  $Y_0^0$ is a constant,  $\lambda_0 = 0$.
  The solution to $\nabla \cdot \gamma \nabla u_{lm} = 0, \; u_{lm}|_{\partial D} = Y_l^m,$ is given by \eqref{eq:sphericalsol} with $R_l(r) = A_j r^{l} + B_j
  r^{-(l+1)}$ for $r_{j-1} \leq r<r_j, \, j = 1, \ldots , N$. The
  coefficients $A_j$ and $B_j$ are determined by matching the
  Dirichlet and Neumann conditions at the $r_j$, $j=1,\ldots, N-1$. The outermost Dirichlet condition (at $r = 1$) gives
  $1 = A_N + B_N$ which leads to the following eigenvalue expression:
\begin{eqnarray}
\lambda_l = \gamma  \left .\frac {\partial v_{lm}}{\partial r}\right|_{\partial D} \quad
= lA_N -(l+1) B_N \quad
= l -(2l+1)B_N \label{eigval} 
\end{eqnarray}

Moreover, by induction it follows that $A_j = B_jC_{j-1}$ for $j = 2,\ldots,N$. 
Again using the Dirichlet condition from the boundary, $1= A_N + B_N$ we get $B_N= (C_{N-1} +1)^{-1}$ which leads to the expression of the eigenvalue as stated in the theorem.
\end{proof}

By \cite{somersaloCheneyIsaacsonIsaacson91} if conductivities $\gamma_L$ and $\gamma_U$ are such that $\gamma_L(r) \leq \gamma_U (r)$
for all $r$, then the eigenvalues 
$\lambda_l^L$ and $\lambda_l^U$ of their corresponding Dirichlet-to-Neumann maps satisfy $\lambda_l^L \leq \lambda_l^U$. This gives a means for finding lower and upper bounds on the eigenvalues 
of a smooth function by finding the eigenvalues of piecewise constant function, $\gamma_L$ and $\gamma_U$ that satisfy $\gamma_L(r)
\leq \gamma (r) \leq \gamma_U(r)$.

\section{Implementation details}\label{sec:numerics}

\subsection{Numerical method for computing the scattering transform $\t(\xi,\zeta)$}
We compute the scattering transform $\t(\xi,\zeta)$ from the
definition \eqref{t} as a comparison to the $\Texp$ approximation and to study the reconstructions from an accurate scattering transform.  The computation requires that we solve the Lippmann-Schwinger equation \eqref{muLS} for $\mu(x,\zeta)$.    Hence we require
\begin{itemize}
\item A method of computation for the Faddeev Green's function in three dimensions
\item A numerical method for the solution of \eqref{muLS}
\item Numerical quadrature for computing $\t(\xi,\zeta)$ from \eqref{t}
\end{itemize}
 We describe each of these in turn.
 
\subsubsection{Computation of the Faddeev Green's function}

The Faddeev Green's function was defined in equations
\eqref{Gzeta}. The effect of scaling and rotation of $\zeta$ on
$G_\zeta$ was analyzed in \cite{corneanKnudsenSiltanen}, and it was
shown that when $\zeta$ satisfying $\zeta \cdot \zeta =0$ is
decomposed in the form
\begin{eqnarray}\label{zdecomp}
  \zeta = \kappa(k_\perp + i k),
\end{eqnarray}
where $k_\perp, k\in\R^3, |k_\perp| = |k|= 1, k \cdot k_\perp = 0,$
and $|\zeta| = \sqrt 2 \kappa,$
then
\begin{eqnarray}\label{sgzeta}
  g_\zeta(x) = \kappa^{n-2}g_{k_\perp + i k}(\kappa x).
\end{eqnarray}
Furthermore, if $R\in SO(2)$ then
\begin{eqnarray}\label{Rgzeta}
  g_\zeta(x) = g_{\RRR\zeta}(R x).
\end{eqnarray}
Combining \eqref{sgzeta} and \eqref{Rgzeta} yields the formula
\begin{eqnarray}
  g_\zeta(x) = \kappa g_{e_1+ie_2}(\kappa R x),
\end{eqnarray}
where $R\in SO(2)$ and the first and second column of $R$ is
$k,k_\perp$ respectively. This formula shows that it is sufficient to
compute $g_{e_1+ie_2}.$ 

To compute $g_{e_1+ie_2}$ we will use formula (6.4) of \cite{newton89}
\begin{eqnarray}\label{gzetaformula}
  g_{e_1+ie_2}(x) = \frac {e^{-r+x_2-ix_1}}{4\pi r} - \frac 1 {4\pi} \int_{s}^1 \frac {e^{-ru+x_2-ix_1}}{\sqrt{1-u^2}}J_1(r\sqrt{1-u^2})du,
\end{eqnarray}
where $J_1$ denotes the  Bessel function of the first find of order one.  Here $r = |x|$ and $s = \hat x \cdot e_2 =
x/|x|\cdot e_2.$ Since the function $J_1(t)/t$ is continuous on the
interval $[0,\infty)$ (in particular at $t = 0$), we will approximate the
integral in \eqref{gzetaformula} by a simple midpoint Riemann sum
\begin{eqnarray*}
  \int_{s}^1 \frac
  {e^{-ru+x_2-ix_1}}{\sqrt{1-u^2}}J_1(r\sqrt{1-u^2})du \approx
  \sum_{j=1}^N \frac
  {e^{-ru(j)+x_2-ix_1}}{\sqrt{1-u(j)^2}}J_1(r\sqrt{1-u(j)^2}) h,
\end{eqnarray*}
where $N$ is the number of discretization points, $h = (1-s)/N$ and
$u(j) = s + (j-1/2)h,\; j =1,2,\ldots, N.$

\subsubsection{The computation of complex geometrical optics}
Having computed the Faddeev Green's function we now turn to the
numerical solution of the integral equation \eqref{muLS} for
$\mu(\cdot, \zeta).$ We will use a method due to Vainikko  \cite{vainikko00} for solving
Lippmann-Schwinger equations; see also \cite{hohage,knudsenMuellerSiltanen04} for
implementations in different contexts.  The main idea is to transform \eqref{muLS} to a multiperiodic
integral equation in $\R^3,$ which can be solved efficiently using FFT. 

Let $G_\rho =
\{x \in \R^3\; | \;
|x_i| \leq \rho \}.$ Then by assumption $\supp(q) \subset \Omega \subset
G_1.$ Extend the potential $q$ and the Green's function $g_\zeta$
to $G_{2}$ such that
\begin{eqnarray*}
\qp(x) &= \left\{
\begin{array}{ll}
  q(x),  &x \in \Omega,\\
    0,  & x \in G_{2} \setminus \Omega,
\end{array}  \right.\qquad
  \gp_\zeta(x) &= \left\{
  \begin{array}{ll}
    g_\zeta(x),  &x \in \Omega,\\
    0,  & x \in G_{2} \setminus \Omega,
  \end{array}\right.
\end{eqnarray*}
and then extend  $\qp$ and $\gp_\zeta$ to $\R^3$ as
periodic functions in all variables with period equal to $4.$ Instead of \eqref{muLS} we consider the
periodic integral equation
\begin{eqnarray}\label{muLSperiodic}
  \mup(x,\zeta) + \int_{\R^3} \gp_\zeta(x-y)  \qp(y) \mup(y,\zeta) dy = 1.
\end{eqnarray}
This equation  is uniquely solvable  since  \eqref{muLS}  is, and
moreover one can show that on $\Omega$ we have
\begin{eqnarray*}
  \mup(x,\zeta) = \mu(x,\zeta),\qquad x\in\Omega.
\end{eqnarray*}
In order to solve \eqref{muLSperiodic} numerically define
\begin{eqnarray*}
  \Z_N^3 = \{ j \in \Z^3 \; | \; -N/2 \leq j_k < N/2,\; k = 1,\ldots,3 \}
\end{eqnarray*}
and the computational grid
\begin{eqnarray*}
  C_N = h \Z_N^3
\end{eqnarray*}
where $h =  4/N$ specifies the discretization fineness. Define the
grid approximation $\phi_N$ of a continuous function $\phi \in C(G_{2})$ by
\begin{eqnarray*}
  \phi_N(jh) = \phi(jh)
\end{eqnarray*}
and the grid approximation $g_N$ of $\gp_\zeta$ (which is smooth except for
a singularity at the origin) for fixed $\zeta$ by
\begin{eqnarray*}
  g_N(jh) = \left\{
  \begin{array}{ll}
    0, & j = 0\\
    \gp_\zeta(jh), & \text{otherwise}. 
  \end{array}\right.
\end{eqnarray*}
The convolution operator appearing in \eqref{muLSperiodic} 
\begin{eqnarray*}
  K \phi (x) = \int_{\R^3} \gp_\zeta(x-y)  \phi(y) dy
\end{eqnarray*}
is now discretized by trigonometric collocation, which, using the
discrete Fourier transform $\F_N$, gives 
\begin{eqnarray*}
   K_N \phi_N (jh) = \F_N^{-1} (\hat \gp_N \cdot \hat  \phi_N).
\end{eqnarray*}
Here $\cdot$ denotes pointwise multiplication. In practice the
discrete Fourier transform can be implemented efficiently using FFT (with
proper zero-padding) in $\O(N^3\log(N))$ arithmetic operations. The total
discretization of \eqref{muLSperiodic} now reads
\begin{eqnarray*}
    \mu_N + K_N(q_N\mu_N) = Q_N 1. 
\end{eqnarray*}
This discrete linear system is solved numerically in matlab using the
iterative algorithm GMRES \cite{saadSchultz}, without setting up a
matrix for the linear map $K_N(q_N\cdot).$

\subsubsection{The scattering transform}
Having computed the grid approximation $\mu_N$ it is straightforward
to evalute $t(\xi,\zeta)$ by using numerical integration in \eqref{t}. In this
implementation we have used a simple midpoint quadrature rule.

\subsection{Numerical method for computing  $\Texp$ for spherically symmetric conductivities}

For the calculation of $\Texp$ we expand $e^{ix\cdot \zeta}$ in terms
of spherical harmonics\footnote{We use here the normalized spherical
  harmonics given by $Y_l^m(\tp) = N_l^mP_l^m(\cos \theta) e^{im\phi}$
  where $N_l^m$ are normalization factors and $P_l^m$ are associated
  Legendre functions.} and $e^{-ix \cdot(\xi+\zeta)}$ in terms of the
spherical harmonics conjugates, 
\begin{eqnarray*} 
 e^{-ix \cdot(\xi+\zeta)}&= \sum_{l=0}^\infty \sum_{m=-l}^{l} a^*_{lm}(\xi,\zeta) [Y_l^m(\tp)]^*\\
e^{ix\cdot \zeta} &= \sum_{k=0}^\infty \sum_{n=-k}^{k} b_{kn}(\zeta) Y_k^n(\tp).
\end{eqnarray*}
Using these expansions leads to 
\begin{eqnarray}\label{eq:TexpSum}
\Texp(\xi, \zeta) =\sum_{l,m} \sum_{k,n}  a^*_{lm}(\xi,\zeta) b_{kn}(\zeta)  \int_{\partial D} [Y_l^m(\tp)]^* (\Lambda_q -\Lambda_0)Y_k^n(\tp) d\sigma
\end{eqnarray}

In the special case of spherically symetric conductivities we can use the knowledge of the eigenvalues of the Dirichlet-to-Neumann maps, $\Lambda_\gamma Y_l^m(\theta,\phi) = \lambda_l Y_l^m$ to simplify the calculation of $\Texp$. In particular we get 
\begin{eqnarray}
\Texp(\xi, \zeta) 
&=&\sum_{l,m,k,n}  a^*_{lm}(\xi,\zeta)b_{kn}(\zeta)   \int_{\partial D} [Y_l^m(\tp)]^* (\lambda_k-k)Y_k^n(\tp) d\sigma\nn \\
&=&\sum_{l,m,k,n}  a^*_{lm}(\xi,\zeta)b_{kn}(\zeta)  (\lambda_k-k) \int_{\partial D} [Y_l^m(\tp)]^* Y_k^n(\tp) d\sigma\nn \\
&=&\sum_{l,m} a^*_{lm}(\xi,\zeta) b_{lm}(\zeta) (\lambda_l-l) 
\label{texpassum}
\end{eqnarray}
The last equality comes from the orthonormality of the spherical harmonics. Equation (\ref{texpassum}) can be easily calculated if the coefficients $a^*_{lm}$ and $b_{lm}$  are available. 
In this work, these coefficients were calculated with a  software package called `\textit{S2kit}' which are C routines that can be accessed from Matlab. Detailed information can be found in \cite{fastHarmExp03}.

\subsection{Computation of the conductivity}
After taking the high frequency limit in \eqref{eq:zetalim} and \eqref{texpzetalim}  we calculate the inverse
Fourier transform to get $q(x)$ and $\qexp(x).$ The integral in the inverse Fourier
transform is here computed numerically using a simple Riemann sum. To get the conductivity $\gamma$ we
need to solve the boundary value problem $\Delta \gamma^{1/2} =
q \gamma^{1/2} $ with $\gamma^{1/2}|_{\partial \Omega} = 1$. This was
realized with the standard Green's function for the Laplace equation
in three dimensions. Using symmetries reduces the problem to a single
integral.

\subsection{Numerical implementation of Calder\'on's method}
 Calder\'on's method based on \eqref{gammaapp} is
simply implemented by evaluating the integral using numerical quadrature. 

\section{Results} \label{sec:examples}
\subsection{Examples}
\label{sec:theexample}
The conductivity distributions we will use in the examples are smooth,
spherically symmetric and constant one near $\partial\Omega$. They are given
by
 \begin{eqnarray}
 \gamma(x) &= (\alpha \Psi(|x|) +1)^2 \nonumber \\
 \Psi(r) &= \left \{ \begin{array}{cl} e^{-\frac{r^2}{(r^2-d^2)^2}}
     \quad &\mbox{for } -d<r<d \\
 0 \quad & \mbox{otherwise} \end{array} \right.\label{Psi}
 \end{eqnarray}
 where $0<d\leq 1$ is a parameter determining the support of $\Psi.$
 The parameter $\alpha$ regulates the amplitude of $\gamma$, which is
 largest at $r= 0$ with amplitude $(\alpha +1)^2$.  A similar function
 was used \cite{SMI00} as an example for the two dimensional problem.

 \subsection{The scattering transform} Let us fix $d = 0.9,\alpha =
 0.3.$ in \eqref{Psi}.  We are interested in the limit of $\txz, \xi
 \in \R^3,\zeta \in \V_\xi,$ when $|\zeta|$ goes to infinity. For
 purpose of illustration we compute $\txz$ for fixed $\xi =
 (10,0,0)$ and varying $\zeta \in\V_\xi$ with $8<|\zeta| <50.$ We use
 a discretization level in the algorithm corresponding to $N=2^6.$ In
 addition we compute $\Texp(\xi,\zeta)$ by \eqref{eq:TexpSum} using the first 30 eigenvalues
 of the Dirichlet-to-Neumann map.  We truncate the sum of the
 spherical harmonics at $l=30,$ which means we use approximately the
 first 900 spherical harmonics. As a benchmark we compute
 $\qhat(\xi).$ The results are shown in figure
 \ref{diffMagOfZeta}.
\begin{figure}[h!]
\begin{center}
\epsffile{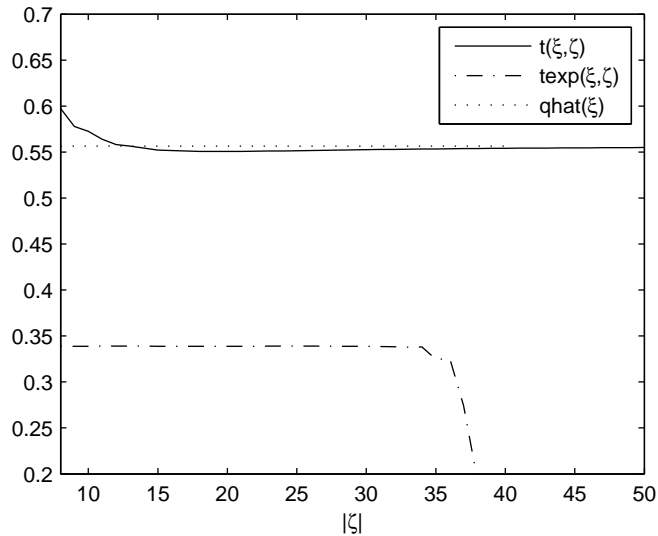}
\caption[Scattering data $\txz$ with varying $|\zeta|$]{$\t(\xi,\zeta),$
  $\Texp(\xi,\zeta)$ calculated for fixed $\xi = (10, 0,0)$ and
  varying $|\zeta|$. Here $d=0.9$ and $\alpha=0.3$.}
\label{diffMagOfZeta}
\end{center}
\end{figure}
We know from Corollary~\ref{Treal} that $\t$ and $\Texp$ are real and
this is consistent with our numerical results.  The data verifies that
for our example $\txz$ converges to $\hat q (\xi)$ as $\zeta
\rightarrow \infty.$ We observe that $\Texp$ is independent of the
magnitude of $\zeta \in \V_\xi,$ until it diverges due to numerical instability. The same phenomena appears in other examples and with
different values of $\xi.$ We believe that this phenomena has to do
with the special class of spherically symmetric conductivities considered
here.

Next we compare $\txz$ and $\Texp(\xi,\zeta)$ for different values of
$\xi$.  For each $\xi = s[1,0,0],\; s \in [0, 50],$ we fix $\zeta\in
\V_\xi$ with $|\zeta| = 50.$ We compute $\txz$ using a discretization
level with $N = 2^6.$ $\Texp$ is computed with the parameters as
above.   As a benchmark we compute $\hat q(\xi).$ The results are displayed in
figure \ref{qhat}.  
\begin{figure}[h!]
\begin{center}
{\epsffile{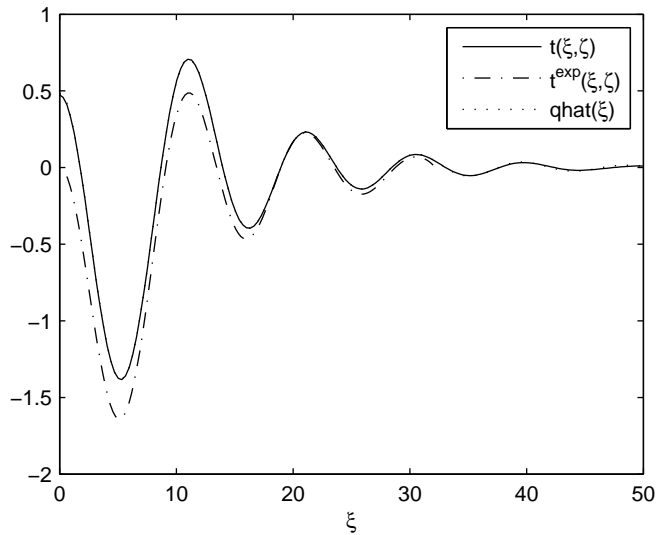}}
\caption[Scattering data]{Scattering data $\t,$ $\Texp$ and $\hat q$
  (qhat) with $d=0.9$ and $\alpha=0.3$. For each $\xi,$ $\zeta\in \V_\xi$ is
  chosen such $|\zeta| = 50.$ The Fourier transform $\hat q$ virtually coincides with
  $\t(\xi,\zeta)$. }
\label{qhat}
\end{center}
\end{figure}
The difference in $\hat q(\xi)$ and $\t(\xi,\zeta)$ is very small.
$\Texp(\xi)$ is displayed only for $0\leq |\xi|\leq 32$ since the
calculation becomes numerically unstable and blows up for $|\xi|> 32$.
One observes good agreement of all three curves for $|\xi|\geq
20$. Close to $|\xi| = 0$ the approximation $\Texp$ is close to zero
and differs from the correct values.

\subsection {The reconstructions}

Evaluting the inverse Fourier transform of the numerically computed $\Texp(\xi)$ and
$\t(\xi,\zeta)$ gives two approximations of $q(x)$ which are displayed in figure \ref{q}.  The
approximation calculated from $\txz$
differs as expected only slightly from the actual value. The
approximation $\qexp$ of $q$ calculated from $\Texp$ (and hence from
the boundary data) is quite different from $q.$ For $x$ near the boundary the $\qexp(x)$ is quite accurate,
but for $x$ near zero there are large discrepancies, especially in the
magnitude.  Looking at the scattering data in figure \ref{qhat}, one
sees two features most likely responsible for that difference.  The
first one is the differences in the values of $\Texp(\xi)$ for $\xi$
close to zero compared to $\hat q(\xi)$.  The second is the truncation
of $\Texp(\xi)$ due to numerical instability for large $\xi$ values.
More details on the influence of the truncation of the scattering data
are provided in section \ref{sec:trunc}.
\begin{figure}[h!]
\begin{center}
{\epsffile{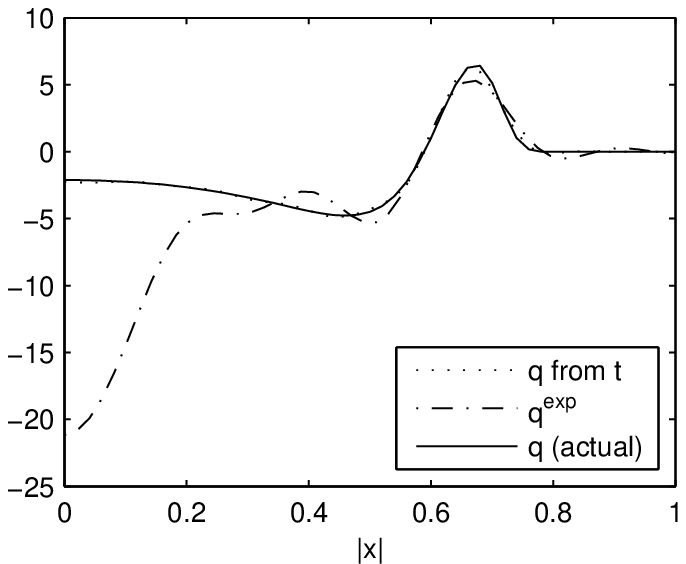}} \hfill {\epsffile{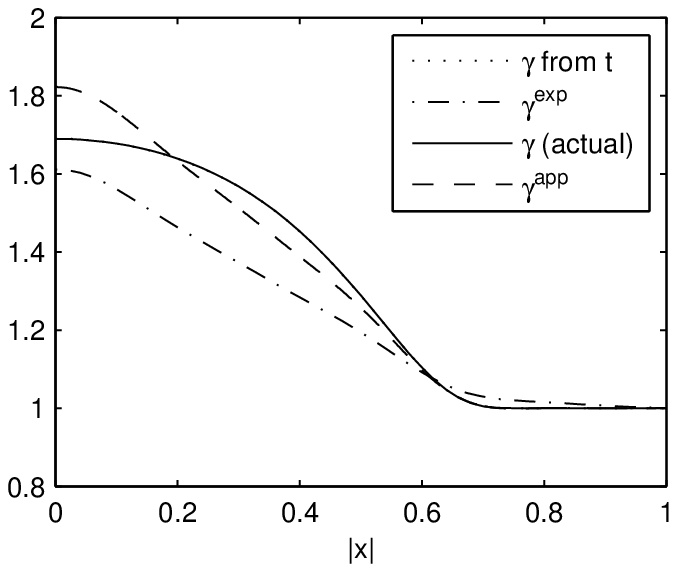}}
\caption[Reconstructions of the Schr\"odinger potential]{Left:  Reconstructions of $q(x)$ by taking the inverse Fourier transform of $\t(\xi,\zeta)$ and  $\Texp(\xi,\zeta)$  for $\alpha = 0.3$ and $d =0.9.$  Right:   Reconstructions of $\gamma$ from $\t,$ $\gammaexp,$ and $\gammaapp$ compared to actual
  conductivity  for $\alpha = 0.3$ and $d = 0.9.$ $\gamma$ from $\t$ nearly coincides  with the actual conductivity.}
\label{q}
\end{center}
\end{figure}

Also in figure \ref{q} we display three reconstructions of the
conductivity distribution. The first reconstruction of $\gamma$ is from
$\txz$. Since $\txz$ is computed from the forward problem, it may be expected that this reconstruction would be very close to the actual value, as it is. The second
reconstruction is $\gammaexp(x)$ from $\Texp,$ and the third
reconstruction $\gammaapp$ is from the linear method
\eqref{gammaapp}. Considering the relatively large difference in
magnitude of $\qexp(x)$, the reconstruction $\gammaexp$ is
surprisingly good. Also $\gammaapp$ is a fairly good reconstruction. A
positive aspect in both reconstructions is that we get $\gamma \equiv 1$ close to the
boundary. Moreover, the overall shape is also fairly well
reconstructed.



\subsection {The influence of the truncation of the scattering data}
\label{sec:trunc}

When we reconstructed $\qexp$ and $\gammaexp$ we truncated the scattering data $\Texp$ due to numerical instabilities. In this section we investigate the 
influence on the reconstructions of the truncation of the true scattering data $\txz.$
\begin{figure}[h!]
 {\epsffile{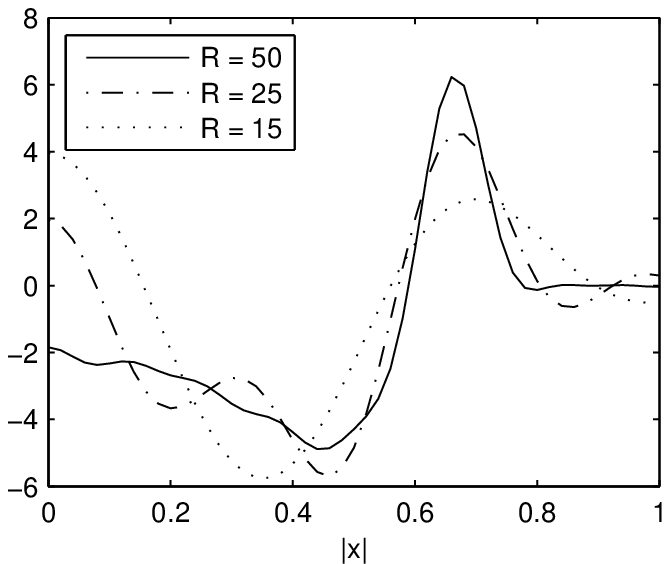}} \hfill {\epsffile{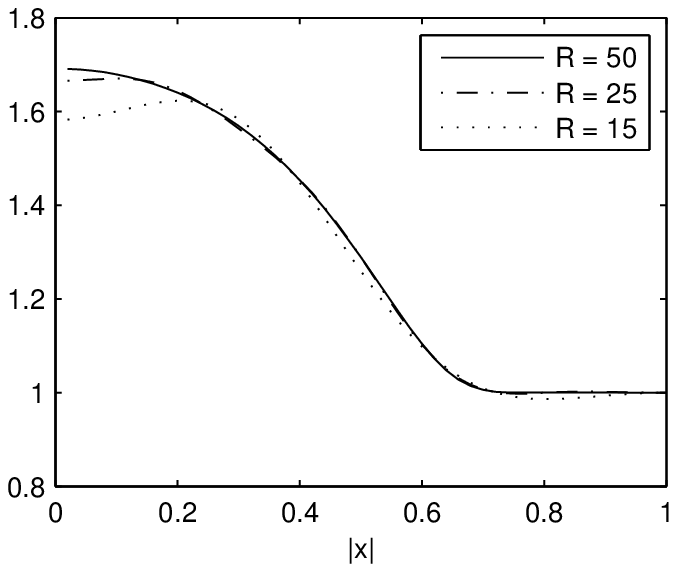}}
\caption[]{Left: reconstructed
   Schr\"odinger potential with truncation of $\t$ at $R=15, 25,$
   and $50.$ Right: reconstructions of ${\gamma}$.}
\label{fig:difftrunc}
\end{figure}
Figure \ref{fig:difftrunc} shows $\t(\xi,\zeta)$ and the reconstructions
$q(x)$ and ${\gamma(x)}$ for different truncations of $\t(\xi,\zeta)$,
namely at $\xi = R$ for $R = 15, 25, 50.$ We have chosen $\zeta \in
\V_\xi$ with $|\zeta| = 50.$ The actual potential and conductivity are
almost identical to the curves corresponding to $R=50.$  It is evident that the amount of truncation
of the scattering transform  influences the reconstruction, and
that a very poorly reconstructed $q$ can still result in a good
approximation of $\gamma$.
This suggests that for the reconstruction of $\gamma$ the values of the scattering data for small $\xi$ are very important.  
This is analogous to observations made in the 2-D case \cite{MS03}.

\subsection {Influence of the support and magnitude of $\gamma(x)^{1/2}-1$}
So far we have used fixed values for $d$ and $\alpha$, which determine
the support and the magnitude of $\gamma(x)^{1/2}-1$.  Figure
\ref{fig:gamexpdiffddiffa} displays the reconstructions $\gammaexp$
and $\gammaapp$ of
$\gamma(x)$ from $\Texp(\xi)$ for different choices of support $d$ and
magnitude $\alpha.$  Each row corresponds to a certain $d$-value
and each column to a specific $\alpha$-value.  For small support and
small magnitude we get good reconstructions, but the quality changes dramatically with
larger amplitude and larger support. Especially $\gammaexp$ does not
recover the actual conductivity very well for the large amplitude
$\alpha = 0.9.$ 
\begin{figure}[h!]
\begin{center}
{\epsffile{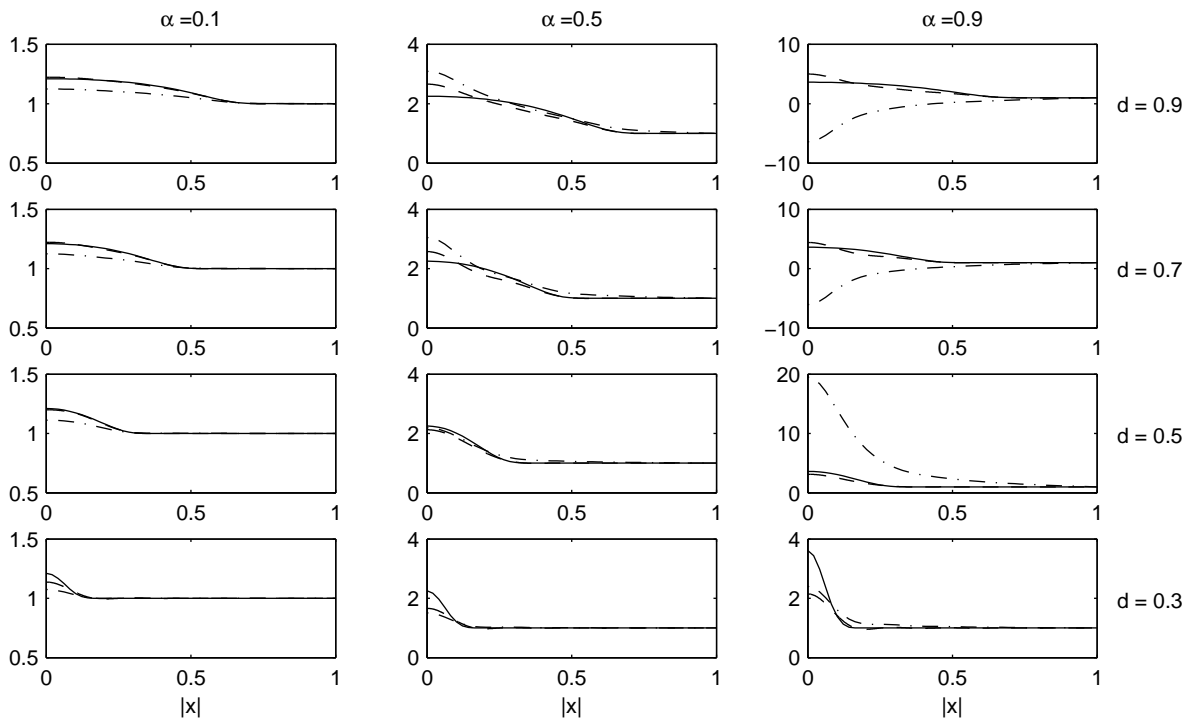}}
\caption[Reconstructions with different support and magnitude of
$\gamma$]{Reconstructions of conductivities of varying support and magnitude: each row corresponds to a specific support $d$
and each column corresponds to a specific magnitude of $\gamma$. The dash-dotted
curves are the $\gammaexp$ reconstructions, the dashed curves are the
$\gammaapp$ reconstructions, and solid curves are the
actual conductivities $\gamma.$}
\label{fig:gamexpdiffddiffa}
\end{center}
\end{figure}

\section{Conclusions}
In this work a direct method based on \cite{nachman88} for
reconstructing a 3-D conductivity distribution from the
Dirichlet-to-Neumann map was implemented and tested on noise-free data. A linearizing approximation
to the scattering transform, denoted $\Texp$ was studied and compared
to Calder\'on's reconstruction algorithm.  Reconstructions of
spherically symmetric conductivities in the unit sphere were computed
using the $\Texp$ approximation, Calder\'on's method, and a scattering
transform computed from the definition requiring knowledge of the
actual Schr\"odinger potential.  The latter case served as a benchmark
to study the quality of reconstructions for which the actual
scattering transform is known.  It was shown that very accurate
reconstructions can be obtained from accurate knowledge of the
scattering transform.  It was found that in contrast to the 2-D case,
the $\Texp$ approximation is inaccurate near the origin, and this
results in poor approximations to the magnitude of the conductivity.
However, the support of $\gamma-1$ and the boundary value $\gamma=1$
was well approximated by all three methods.  Truncating the computed
scattering transform in the computations was found to have a profound
effect on the reconstructed Schr\"odinger potential $q$, but the
affect on the reconstructed conductivity $\gamma$ was less dramatic.
In summary, it appears that the use of the full scattering transform
in this method is a promising approach for 3-D reconstructions, while
linearizations lead to significant inaccuracies in the reconstructed
amplitudes.

\section*{Acknowledgments}
The authors thank D. Isaacson and G. Boverman for helpful discussions on the spherical harmonics. 
This material is based upon work supported by the National Science Foundation under Grant No. 0513509 (J. Mueller).

\section*{References}


\begin{thebibliography}{HRKM03}

\bibitem[AP06a]{astalaPaivarinta2}
K.\ Astala and L.\ P{\"a}iv{\"a}rinta, \emph{A boundary integral equation
  for {C}alder\'on's inverse conductivity problem}, Collect. Math. (2006),
  no.~Vol. Extra, 127--139. 

\bibitem[AP06b]{astalaPaivarinta}
K.\ Astala and L.\ P\"aiv\"arinta, \emph{Calder\'on's inverse conductivity
  problem in the plane}, Ann.\ of Math.\ (2) \textbf{163} (2006), no.~1,
  265--299.

\bibitem[BM08]{bikowskiMueller}
  J.\ Bikowski and J.\ Mueller, \emph{2{D} {EIT}
    reconstructions using {C}alder\'on's method}, Inverse
  Probl. Imaging, \textbf{2} (2008), no.~1, 43--61.
  

\bibitem[BTJIS08]{Greg_Dave}
G.\ Boverman, K.\ Tzu-Jen, D.\ Isaacson, and G.\ Saulnier, \emph{An
  implementation of calder\'on's method for 3-d limited view eit}, IEEE Trans.
  Med. Imaging \textbf{1} (2008), no.~1, 1--10.


\bibitem[BT03]{brownTorres}
R.\ Brown and R.\ H.\ Torres, \emph{Uniqueness in the inverse
  conductivity problem for conductivities with {$3/2$} derivatives in {$L\sp
  p,\ p>2n$}}, J. Fourier Anal. Appl. \textbf{9} (2003), no.~6, 563--574.
 


\bibitem[BU97]{brownUhlmann}
R.\ Brown and G.\ Uhlmann, \emph{Uniqueness in the inverse
  conductivity problem for nonsmooth conductivities in two dimensions}, Comm.
  Partial Differential Equations \textbf{22} (1997), no.~5-6, 1009--1027.
 

\bibitem[Cal80]{Calderon}
A.\ Calder{\'o}n, \emph{On an inverse boundary value problem}, Seminar
  on Numerical Analysis and its Applications to Continuum Physics (Rio de
  Janeiro, 1980), Soc. Brasil. Mat., Rio de Janeiro, 1980, pp.~65--73.
 

\bibitem[CKS06]{corneanKnudsenSiltanen}
H.\ Cornean, K.\ Knudsen, and S.\ Siltanen, \emph{Towards a {$d$}-bar
  reconstruction method for three-dimensional {EIT}}, J. Inverse Ill-Posed
  Probl. \textbf{14} (2006), no.~2, 111--134.

\bibitem[DM10]{DeAngeloMueller}
M.\ DeAngelo and J.\ Mueller, \emph{D-bar reconstructions of human chest and tank data using an improved approximation to the scattering transform}, Physiol. Meas. \textbf{31} (2010), no.~2, 221--232.

\bibitem[Hoh01]{hohage}
T.\ Hohage, \emph{On a numerical solution of a three-dimensional inverse
  medium scattering problem}, Inverse Problems \textbf{17} (2001), no.~6,
  1743--1763.

\bibitem[HRKM03]{fastHarmExp03}
D.\ Healy, Jr., D.\ Rockmore, P.\ Kostelec, and S.\ Moore, \emph{F{FT}s
  for the 2-sphere-improvements and variations}, J. Fourier Anal. Appl.
  \textbf{9} (2003), no.~4, 341--385.

\bibitem[Hol05]{holder_book}
D.\ Holder
\newblock {\em Electrical impedance tomography}.
\newblock first edition, Institute of Physics Publishing, Bristol and Philadelphia, 2005.
  
\bibitem[IMNS04]{isaacson_reconstruct_chest_2004}
D.\ Isaacson, J.\ Mueller, J.\ Newell, and S.\ Siltanen.
\newblock Reconstructions of chest phatoms by the d-bar method for electrical
  impedance tomography.
\newblock {\em IEEE Trans Med Imaging}, \textbf{23} (2004), no~7, 821--828.

\bibitem[IMNS06]{isaacson_imaging_cardiac_2006}
D.~Isaacson, J.\ Mueller, J.\ Newell, and S.\ Siltanen.
\newblock Imaging cardiac activity by the d-bar method for electrical impedance
  tomography.
\newblock {\em Physiol. Meas.}, 27, 2006.

\bibitem[Knu03]{knudsen03}
K.\ Knudsen
  \emph{A new direct method for reconstructing isotropic conductivities in the plane,}, Physiological Measurements,
            \textbf{24} (2003), no.~2,391--401.


\bibitem[KLMS07]{knudsenLassasMuellerSiltanen07}
K.\ Knudsen, M.\ Lassas, J.\ Mueller, and S.\ Siltanen,
  \emph{D-bar method for electrical impedance tomography with
              discontinuous conductivities}, SIAM J. Appl. Math.,
            \textbf{67} (2007), no.~3, 893--913.

\bibitem[KLMS09]{knudsenLassasMuellerSiltanen09}
K.\ Knudsen, M.\ Lassas, J.\ Mueller, and S.\ Siltanen,
  \emph{Regularized D-bar method for the inverse conductivity problem}, Inverse Probl. Imaging,
        \textbf{3} (2009), no~4, 599--624.

\bibitem[KMS04]{knudsenMuellerSiltanen04}
K.\ Knudsen, J.\ Mueller, and S.\ Siltanen, \emph{Numerical solution
  method of the dbar-equation in the plane}, Journal of Computational Physics
  \textbf{198} (2004), no.~2, 500--517.

\bibitem[KT04]{knudsenTamasan}
K.\ Knudsen and A.\ Tamasan, \emph{Reconstruction of less regular
  conductivities in the plane}, Comm. Partial Differential Equations
  \textbf{29} (2004), no.~3-4, 361--381. 


\bibitem[MS03]{MS03}
J.\ Mueller and S.\ Siltanen, \emph{Direct reconstruction of
  conductivities from boundary measurements}, Siam J. Sci. Comp. \textbf{24}
  (2003), no.~4, 1232--1266.
  
\bibitem[EM09]{murphy_2009}
E.\ Murphy and J.\ Mueller,
\emph{Effect of errors in domain shape modeling in 2-D reconstructions by the D-bar method},
IEEE Trans. Med. Imaging \textbf{28} (2009), no. ~10, 1576--1584.


\bibitem[Nac88]{nachman88}
A.\ Nachman, \emph{Reconstructions from boundary measurements}, Ann. of
  Math. (2) \textbf{128} (1988), no.~3, 531--576.

\bibitem[Nac96]{nachman96}
A.\  Nachman, \emph{Global uniqueness for a two-dimensional inverse boundary value
  problem}, Ann. of Math. (2) \textbf{143} (1996), no.~1, 71--96.
  \MR{96k:35189}

\bibitem[New89]{newton89}
R.\ Newton, \emph{Inverse {S}chr\"odinger scattering in three dimensions},
  Springer-Verlag, Berlin, 1989. 

\bibitem[Nov88]{novikov}
R.\ Novikov, \emph{A multidimensional inverse spectral problem for the
  equation $-{\Delta}\psi +(v(x)-{E}u(x))\psi=0$}, Funktsional. Anal. i
  Prilozhen. \textbf{22} (1988), no.~4, 11--22, 96, translation in Funct. Anal.
  Appl., {\bf 22} (1988), no. 4, 263--272. 

\bibitem[NSU88]{NSU}
A.\ Nachman, J.\ Sylvester, and G.\  Uhlmann, \emph{An
  $n$-dimensional {B}org-{L}evinson theorem}, Comm. Math. Phys. \textbf{115}
  (1988), no.~4, 595--605.

\bibitem[PPU03]{PPU}
L.\ P\"aiv\"arinta, A.\ Panchenko, and G.\ Uhlmann,
  \emph{Complex geometrical optics solutions for {L}ipschitz conductivities},
  Rev. Mat. Iberoamericana \textbf{19} (2003), no.~1, 56--72.

\bibitem[SCII91]{somersaloCheneyIsaacsonIsaacson91}
E.\ Somersalo, M.\ Cheney, D.\ Isaacson, and E.\ Isaacson, \emph{Layer
  stripping: a direct numerical method for impedance imaging}, Inverse Problems
  \textbf{7} (1991), no.~6, 899--926. 

\bibitem[SMI00]{SMI00}
S.\ Siltanen, J.\ Mueller, and D.\ Isaacson, \emph{An
  implementation of the reconstruction algorithm of {A}. {N}achman for the 2{D}
  inverse conductivity problem}, Inverse Problems \textbf{16} (2000), no.~3,
  681--699. Erratum in Inverse Problems \textbf{17} (2001), no.~5, 1561--1563.

\bibitem[SMI01]{ams}
S.\ Siltanen, J.\ Mueller, and, D.\ Isaacson,
\emph{Reconstruction of high contrast 2-D conductivities by the
  algorithm of A. Nachman}, in Radon transforms and tomography (South Hadley, MA, 2000), 241--254, 
Contemp. Math., 278, Amer. Math. Soc., Providence, RI, 2001. 


\bibitem[SS86]{saadSchultz}
Y.\ Saad and M.\ Schultz, \emph{G{MRES}: a generalized minimal
  residual algorithm for solving nonsymmetric linear systems}, SIAM J. Sci.
  Statist. Comput. \textbf{7} (1986), no.~3, 856--869.

\bibitem[SU87]{sylvesterUhlmann87}
J.\ Sylvester and G.\ Uhlmann, \emph{A global uniqueness theorem for an
  inverse boundary value problem}, Ann. of Math. (2) \textbf{125} (1987),
  no.~1, 153--169.

\bibitem[Vai00]{vainikko00}
G.\ Vainikko, \emph{Fast solvers of the {L}ippmann-{S}chwinger equation},
  Direct and inverse problems of mathematical physics (Newark, DE, 1997),
  Kluwer Acad. Publ., Dordrecht, 2000, pp.~423--440.

\end{thebibliography}

\providecommand{\bysame}{\leavevmode\hbox to3em{\hrulefill}\thinspace}
\providecommand{\MR}{\relax\ifhmode\unskip\space\fi MR }
\providecommand{\MRhref}[2]{%
  \href{http://www.ams.org/mathscinet-getitem?mr=#1}{#2}
}
\providecommand{\href}[2]{#2}

\end{document}